\documentclass[final]{article}
\usepackage{amsmath,amsthm}
  \usepackage{graphics} 
\usepackage{graphicx}  \usepackage{epstopdf}
 \usepackage[colorlinks=true]{hyperref}
\hypersetup{urlcolor=blue, citecolor=red}
\usepackage{amsfonts}
\usepackage[color]{showkeys}
\definecolor{refkey}{gray}{.75}
\usepackage{xcolor}
\definecolor{rev}{RGB}{0,0,0}
\definecolor{tbp}{RGB}{255,0,0}

\newtheorem{theorem}{Theorem}[section]

\newtheorem{lemma}[theorem]{Lemma}

\def\p{\partial}
\def\tilde{\widetilde}

\def\bs{\boldsymbol}

\def\xb{\bs{x}}
\def\xib{\bs{\xi}}
\def\ab{\bs{a}}
\def\bb{\bs{b}}
\def\nb{\mathbf{n}}
\def\ub{\bs{u}}

\def\kb{\bs{\mathrm{k}}}

\def\M{\mathcal{M}}

\def\l2m{L^2(\M)}
\def\w1pm{W^{1,p}(\M)}
\def\w1pO{W^{1,p}(\Omega)}
\def\h10m{H^1_0(\M)}
\def\Gper{G_{\mathrm{per}}^{\xb,z}}
\def\phi{\varphi}

\def\C01{C^{0,1}\left(\overline{\Omega}_T\right)}

\ifpdf
  \DeclareGraphicsExtensions{.eps,.pdf,.png,.jpg}
\else
  \DeclareGraphicsExtensions{.eps}
\fi

\begin{document}
\title{A global well-posedness result for the three-dimensional inviscid
  quasi-geostrophic equation over a cylindrical domain}

\author{Qingshan Chen}




\maketitle

\begin{abstract}
The three-dimensional quasi-geostrophic equation is considered over a
cylindrical domain with a multiply connected horizontal
cross-section. Homogeneous Neumann boundary conditions, tantamount to
homogeneous density fields, are imposed on the top and bottom
surfaces, while no-flux boundary conditions combined with constant
circulations are imposed on the lateral boundary loops. The global existence and
uniqueness of a generalized solution is proven, provided that
the initial potential vorticity (PV) field is essentially bounded. If the
initial PV field is differentiable, then the solution is shown to
satisfy the system in the classical sense. 
\end{abstract}

\section{Introduction}\label{s:intro}
On the hierarchy of fluid models for large-scale geophysical
flows, the quasi-geostrophic equations occupy an important
place as they are much simpler than other more
comprehensive ones, such as the primitive equations, and yet,
 manage to capture one of the most important dynamics of
geophysical flows, namely the transport of the potential vorticity by
its self-generated velocity field. Due to its
simple and yet relevant nature, the (barotropic) quasi-geostrophic was
selected by Charney et (\cite{Charney1950-sx}) as one of the few
problems to be 
tackled by the very first electronic computer, ENIAC, some seventy-five
years ago. Ever since then, the QG
equations have remained an important model in the study of geophysical flows, see
eg.~\cite{Holland1978-us,McWilliams1989-ui,Pedlosky1964-wb,Holland1978-us},
among many others.

The QG 
equations typically take the form of a single scalar transport equation (see
next section for the form). Behind this simple form, however, lie
several different configurations, depending the assumed density
profiles. The simplest 
case involves a uniform density 
profile, and the resulting equation is called the barotropic QG equation. If a
uniform density profile is considered as too 
simplistic for the problem at hand, then a multi-layer QG model can be
used, which assumes that fluid is stratified into multiple layers of
distinct densities, with lighter ones flowing over heavier
ones. Finally, in the most realistic setting, a continuum 
of density profile is assumed, and the three-dimension QG equation
results. The current article deals with this last
setting.

Due to their important roles in the
study of geophysical flows, as well as their close connections to
other fluid models (eg. Euler equations, SQG), the well-posedness of the QG
equations has long attracted mathematicians' attention. Dutton
(\cite{Dutton1974-oo}) considers the three-dimensional QG equation 
in a rectangular box, with periodic
boundary conditions on the sides, and homogeneous Neumann boundary
conditions on the top and bottom. The global
existence of a generalized solution and the uniqueness of a classical
solution are established. Bourgeois \& Beale (\cite{Bourgeois1994-tv})
study the model under a similar setting, and the existence of a global
strong solution is proven. To this setup,
Desjardins \& Grenier (\cite{Desjardins1998-hb}) add the transport
equations of the 
density (vertical gradient of the streamfunction) with a diffusive
Ekman pumping effect on the top and bottom boundary surfaces. The
existence of a global weak solution is proven. Puel \& Vasseur
(\cite{Puel2015-mw}) consider the 3D QG on the upper 
half space, with the density transport equation, but without the Ekman
pumping effect, posed at the bottom boundary. The global existence of
a weak solution is proven. With an added Ekman pumping effect, Novack \& Vasseur
(\cite{Novack2018-op}) prove the existence \& uniqueness of a global strong solution to
the system. With initial data from Lebesgue
spaces, Novack (\cite{Novack2019-ch}) establishes the global existence of a weak
solution. Novack (\cite{Novack2020-fl}) further confirms the
non-uniqueness of weak solutions in the $C^\zeta_{tx}$, $\zeta <
\frac{1}{5}$, class. Novack \& Vasseur (\cite{Novack2020-gi, Novack2020-vv}) consider the 3D QG on
a cylindrical domain, with non-diffusive transport equations on the top \&
bottom boundaries. The global existence of a
weak solution \& the local existence of a
classical solution are proven.

Among the aforementioned works, especially in the non-diffusive 
cases, only partial well-posedness results,
either existence or uniqueness, have been
proven, in contrast with the
case of the 2D Euler equation, where both the
existence and uniqueness of weak and classical solutions have been
established (\cite{Yudovich1963-bj,Kato1967-pe}). It is
our goal to pursue the global well-posedness of the QG equations in a
non-viscous setting with realistic boundary conditions. Chen
(\cite{Chen2019-fh}) studies the 2D QG equation under a free
surface over a bounded simply-connected domain with no-flux conditions on the
boundary. The global existence \& uniqueness of a
weak solution is proven. The same result is
then extended in \cite{Chen2019-bn} to the multilayer QG equations,
again over a bounded and 
simply-connected domain.

The current work aims to study the global well-posedness of
the 3D QG equation. To make the problem more
tractable, we first replace the transport equations from the top and bottom
boundaries (see \cite{Desjardins1998-hb, Novack2020-gi}) with 
homogeneous Neumann boundary conditions, without any Ekman pumping
effect, of course. The homogeneous Neumann boundary conditions
physically amount to 
requiring the density fields to be homogeneous on the top and bottom
surfaces, a condition that largely holds true over regions where the
quasi-geostrophic equations are applicable
(\cite{Cushman-Roisin2011-en}).  Now, with the top and bottom boundaries
taken care of, 
the setup of the horizontal boundary conditions/constraints remains as
the main obstacle. The difficulty stems from the
imbalance between the dimension of the spatial domain (3D) and the
dimension of the flow dynamics (fundamentally 2D). We consider the
flow in a cylindrical domain with a 
multiply connected cross-section. Drawing on
mathematical derivations of Grenier, Novack \& Vasseur (see references
given above), and physicists' intuition
(\cite{Pedlosky1987-gk,Holm1986-rb,Liu1996-su}), we 
propose a setup that combines the no-flux boundary conditions with the
constant circulation constraints (\cite{ChenUnknown-gp}). 
This setup 
allows us to explicitly specify the equations and conditions for the
Green function for this BVP, and establish the needed regularity of the
streamfunction. With this regularity on the streamfunction and the
velocity field derived from it, it turns out that the 3D QG indeed
behaves more like a two-dimensional model than a 3D one, and the
classical techniques (see references from Yudovich and Kato, and also
\cite{Marchioro1994-yt})  for 2D flows  can be applied. The global existence
and uniqueness is proven, when the potential vorticity is essentially
bounded; when the initial PV is in $C^1$, the
solution is then shown to be a classical solution of the system.
To the
best of our knowledge, these results are new for the inviscid 3D QG
equation.

The
remainder of the article is organized as 
follows. In section 2, a complete setup of the
model is given. In section 3, the well-posedness
of this elliptic boundary value problem as well as the regularity of
the streamfunction are treated. The global existence and uniqueness of
a weak solution are established in Section 4. Higher regularity is
obtained with smoother initial 
data in Section 5. Finally, a summary is given in Section 6.

\section{Specification of the physical model}
\begin{figure}[h]
  \centering
  \includegraphics[width=4in]{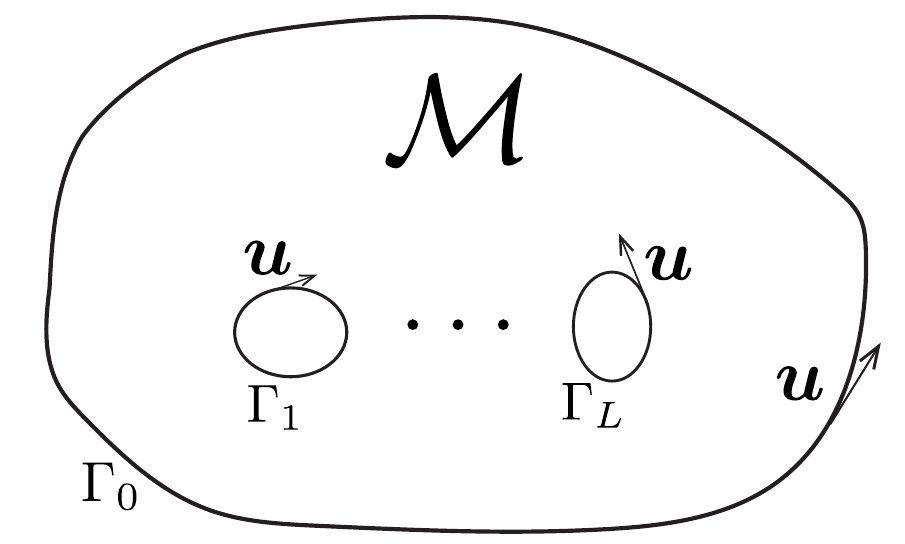}
  \caption{A multiply-connected horizontal cross-section of the
    cylindrical domain.}
  \label{fig:domain}
\end{figure}
We consider the inviscid three-dimensional quasi-geostrophic (QG)
equations on a cylindrical domain $\Omega = \M\times(0,\,1)$, where
$\M$ is a general multiply connected two-dimensional domain with
boundaries $\p\M = \cup_{l=0}^L \Gamma_l$, with $\Gamma_0$ designating
the exterior boundary loop (Figure \ref{fig:domain}).
The main equation takes the elegant form of a scalar transport
equation,
\begin{equation}
  \label{eq:11}
\dfrac{\p}{\p t} q + \ub\cdot\nabla q = 0.  
\end{equation}
Here $q \equiv q(\xb,z,t)$, with $x\in\M$ and $z\in (0,\,1)$, represents the
potential vorticity in a three-dimensional space, $\ub \equiv
\left(u(\xb,z,t),\,v(\xb,z,t)\right)$ the horizontal velocity field in a
three-dimensional space, and $\nabla \equiv (\p_x,\,\p_y)$ the
horizontal gradient operator. The potential vorticity $q$ and the
velocity field $\ub$ are connected through a third variable, the
streamfunction $\psi \equiv \psi(x,\,z,t)$, as follows,
\begin{align}
  &q = \Delta\psi + \dfrac{\p}{\p z}\left(\dfrac{1}{S}\dfrac{\p
  \psi}{\p z}\right),\label{eq:12}\\
  &\ub = \nabla^\perp\psi.\label{eq:13}
\end{align}
In the above, $\nabla^\perp \equiv \kb\times\nabla \equiv
(-\p_y,\,\p_x)$ stands for the horizontal skew gradient operator, and
$S\equiv S(z)$ the strictly positive stratification factor, i.e.~$S(z)
\ge \delta$ for some $\delta>0$  and for 
$\forall\,z\in(0,\,1)$. To simplify the presentation below, we take
$S\equiv 1$ to be a constant; the general case of a variable
stratification factor can be handled through a rescaling in the
vertical direction.  


In order to recover the streamfunction $\psi$ from the PV $q$ using
the elliptic equation \eqref{eq:12}, various boundary conditions are
needed. 
No-flux boundary conditions are applied on the lateral boundary
$\p\M\times(0,1)$, which means that, on each boundary loop, the streamfunction must take
constant values at any vertical level, i.e.
\begin{equation}
    \label{eq:9}
  \psi(\xb,\,z,\,t)|_{\Gamma_l} =
  \textrm{independent of }\xb,\qquad 0\leq l\leq L.
\end{equation}

Boundary conditions on the top and bottom surfaces are
supplied by the physical constraints that density should be simply
transported around, without change, by the flow. Under the hydrostatic
approximation, density is proportional to the vertical pressure
gradient. Therefore, on the top and bottom surface, a physical
boundary condition is given by
\begin{equation*}
  \dfrac{\p}{\p t}\dfrac{\p\psi}{\p z} +
  \ub\cdot\nabla\dfrac{\p\psi}{\p z} = 0, \qquad \forall\,\xb\in\M,\,\,z
  = 0,\,1.
\end{equation*}
This transport equation is the physical model that inspires  the
well-known surface QG equation, under which  the vertical depth has
been extended to infinity, and thus eliminated, and the vertical
derivative has been generalized to the fractional Laplace operator
$\Delta^{\frac{1}{2}}$ (REF). This physical boundary condition
is relevant, interesting, and also challenging. For this study, we
consider a simplified version of the condition, where the density
remains uniform on the top and bottom surfaces, i.e.
\begin{equation*}
  \dfrac{\p\psi}{\p z} = 0,\qquad \forall\,(\xb,\,z)\in\M\times\{
  0,\,1\}. 
\end{equation*}

The system is under-determined due to the fact that the lateral
boundary values of the streamfunction $\psi$ remain unknown. To fully
determine the 
system, more constraints is needed. With the streamfunction
representing the pressure or its fluctuations, fixing the values of
the streamfunction along the boundary loop through the classical
Dirichlet boundary conditions will not be physical. On the other hand,
it is well known (Pedlosky, Arnold, Yudovich) that, for
two-dimensional or predominantly two-dimensional flows subject to
conservative forces, the
circulation along each closed boundary loop is conserved in
time. Thus, in stead of the boundary values of $\psi$, one can
specify the circulations of the velocity field along each
boundary loop,
\begin{equation}
  \label{eq:1}
  \int_{\Gamma_l} \ub\cdot{\bs\tau} ds \equiv \int_{\Gamma_l}
  \dfrac{\p\psi}{\p{\bs n}} ds = c_l(z), \qquad 0\leq l\leq L, 
\end{equation}
where $c_l(z)$ is a given function that depends on the vertical
coordinate $z$, but independent of $\xb$ or $t$.  

So far, it is obvious that any solution to the elliptic system can not
be unique because any solution plus a constant will still be a
solution. To eliminate this arbitrariness from the system, one more
constraint is needed, and a natural one is given by
\begin{equation}\label{eq:42}
  \int_0^1 \int_\M \psi(\xb,\,z,\,t)d\xb dz = 0.
\end{equation}

In summary, at every instant $t$ in time,
when the potential vorticity $q$ is known, the streamfunction $\psi$ can be
found by solving a non-standard three-dimensional elliptic boundary
value problem,
\begin{equation}
  \label{eq:43a}
\left\{
  \begin{aligned}
\Delta\psi + \frac{\p^2 \psi}{\p z^2}&=
   q(\xb,\,z,\,t), & &\xb\in\M,\,z\in(0, 1),\\
   \psi(\xb,\,z,\,t )|_{\Gamma_l} &= \textrm{indep.~of~}\xb, & &
   \forall\, 0\leq l\leq L,\,\,z\in(0,1),\\
   \dfrac{\p\psi}{\p z} &= 0, &  &\xb\in\M,\,z = 0,\,1,\\
   \int_{\Gamma_l}
   \dfrac{\p\psi}{\p{\bs n}} ds &= c_l(z), &  &0\leq l\leq L, \\
\int_0^1\!\int_\M \psi(\xb,z,t) d\xb dz &= 0, & & t>0.
  \end{aligned}\right.
\end{equation}

Integrating the first equation of \eqref{eq:43a} over $\Omega$ and
utilizing the boundary conditions specified, we find that
\begin{equation}
  \label{eq:43}
  \sum_0^L \int_0^1 c_l(z)  dz = \int_0^1\!\int_\M q(\xb,z)d\xb dz.
\end{equation}
This is a compatibility condition that the PV field and the boundary
data have to satisfy.

The elliptic system \eqref{eq:43a} plays an important role in the 3D QG system, as it
connects the transport velocity field, $\ub$, with the transportee, the
potential vorticity $q$, via the streamfunction $\psi$. In numerical
simulations with the QG equations, this system will have to be solved
at every time step
in order to recover the velocity from the potential vorticity (see
\cite{Chen2018-bu} for the barotropic case).

The potential vorticity can be split into a
volume averaged part and a transient part,
\begin{equation*}
  q(\xb,\,z,\,t) = q^0 + q^1(\xb,\,z,\, t),
\end{equation*}
where
\begin{equation*}
  q^0(z) = \dfrac{1}{|\Omega|}\int_0^1 \int_\M q(\xb,\,z,\,t) d\xb dz
\end{equation*}
is the volume averaged part, and the transient part $q^1$ is then
volume average zero by design. 
It is important to note that this averaged quantity $q^0$ is time independent,
because of the conservation of the potential vorticity at each $z$
level, thanks to the transport equation \eqref{eq:11} and the no-flux boundary condition
\eqref{eq:9}.

The elliptic system \eqref{eq:43a} has a corresponding two-way splitting,
\begin{equation*}
  \psi = \psi^1 + \psi^0,
\end{equation*}
with $\psi^1$ solving
\begin{equation}
  \label{eq:a44}
\left\{
  \begin{aligned}
\Delta\psi^1 + \frac{\p^2 \psi^1}{\p z^2}&=
   q^1(\xb,\,z,\,t), & &\xb\in\M,\,z\in(0, 1),\\
   \psi^1(\xb,\,z,\,t )|_{\Gamma_l} &= \textrm{indep.~of~}\xb, & &
   \forall\, 0\leq l\leq L,\,\,z\in(0,1),\\
   \dfrac{\p\psi^1}{\p z} &= 0, &  &\xb\in\M,\,z = 0,\,1,\\
   \int_{\Gamma_l}
   \dfrac{\p\psi^1}{\p{\bs n}} ds &= 0, &  &0\leq l\leq L, \\
  \int_0^1 \int_\M \psi^1(\xb,z,t) d\xb dz&= 0, & &
  \end{aligned}\right.
\end{equation}
and $\psi^0$ solving
\begin{equation}
  \label{eq:a45}
\left\{
  \begin{aligned}
\Delta\psi^0 + \frac{\p^2\psi^0}{\p z^2} &=
   q^0, & &\xb\in\M,\,z\in(0, 1),\\
   \psi^0(\xb,\,z,\,t )|_{\Gamma_l} &= \textrm{indep.~of~}\xb, & &
   \forall\, 0\leq l\leq L,\,\,z\in(0,1),\\
   \dfrac{\p\psi^0}{\p z} &= 0, &  &\xb\in\M,\,z = 0,\,1,\\
   \int_{\Gamma_l}
   \dfrac{\p\psi^0}{\p{\bs n}} ds &= c_l(z), &  &0\leq l\leq L, \\
  \int_0^1 \int_\M \psi^0(\xb,z,t) d\xb &= 0. & &
  \end{aligned}\right.
\end{equation}
Among the two components of $\psi$, 
$\psi^0$ is time independent, since both $q^0$ and $c_l's$
are, and $\psi^1$ is the only time dependent component of $\psi$.
We let
\begin{equation*}
  \ub^1 = \nabla^\perp\psi^1, \qquad \ub^0 =
  \nabla^\perp \psi^0. 
\end{equation*}
Thus $\ub^1$ is the time dependent (transient) part of $\ub$, and
$\ub^0$ the time-independent part. Replacing $q$ by $q^0 + 
q^1$ and $\ub$ by $\ub^0 + \ub^1$ in \eqref{eq:11}, we have
\begin{equation*}
\dfrac{\p}{\p t} q^1 + \ub^1\cdot\nabla q^1 +
\ub^0\cdot\nabla q^1 = 0, \qquad\M\times(0,\,1).
\end{equation*}
It is clear that the nonlinearity of the 3D QG equations comes from
the coupling of $\ub^1$ and $q^1$; the steady-state velocity
filed $\ub^0$ is independent of $q^1$, and provides a constant
drifting velocity to the system. The nonlinearity is of course the
main source of difficulty for this system, and the focus of the
current paper. Hence, in the rest of the
paper, we assume that potential vorticity is horizontally average
zero initially,
\begin{equation}
  \label{eq:3}
  \int_\M q(\xb,z,t)d\xb = 0,\qquad \forall\, z\in (0,\,1),\,\, t>0.
\end{equation}
and that the circulations along all the boundary
loops are also zero ($c_l = 0$, for $0\leq l\leq L$) as well.

In summary, the elliptic
boundary value problem \eqref{eq:43a} now has the form
\begin{equation}
    \label{eq:2}
\left\{
  \begin{aligned}
\Delta\psi + \frac{\p^2 \psi}{\p z^2} &=
   q(\xb,\,z,\,t), & &\xb\in\M,\,z\in(0, 1),\\
   \psi(\xb,\,z,\,t )|_{\Gamma_l} &= \textrm{indep.~of~}\xb, & &
   \forall\, 0\leq l\leq L,\,\,z\in(0,1),\\
   \dfrac{\p\psi}{\p z} &= 0, &  &\xb\in\M,\,z = 0,\,1,\\
   \int_{\Gamma_l}
   \dfrac{\p\psi}{\p{\bs n}} ds &= 0, &  &0\leq l\leq L, \\
  \int_0^1\!\int_\M \psi(\xb,z,t) d\xb d z &= 0. & &,
  \end{aligned}\right.
\end{equation}
with the PV $q$ satisfying the constraint \eqref{eq:3}.

The initial state of the system is given by
\begin{equation}
  \label{eq:14}
  q(\xb,\,z,\,0) = q_0(\xb,\,z),
\end{equation}
where $q_0(\xb,z)$ is an arbitrary function satisfying the condition
\eqref{eq:3}.

The 3D QG system to be studied comprises of the equations
\eqref{eq:11}, \eqref{eq:13}, the elliptic system \eqref{eq:2}, and
the initial condition \eqref{eq:14}.
The analysis presented here can be
easily adapted for the case of nonzero average PV and/or none zero
circulations along the boundary loops. 
\section{Well-posedness and solution regularity of the non-standard
  three-dimensional elliptic boundary value problem}\label{s:elliptic}

In this section, the dependence on time $t$ is temporarily
suppressed.\\

\noindent{\bf Existence and uniqueness of a weak ($H^1$) solution}\\
One assumes that $\psi$ is a smooth solution to the elliptic BVP
\eqref{eq:2}, and $\varphi$ another smooth function satisfying the
same boundary conditions. Multiplying the elliptic equation
$\eqref{eq:2}_1$ by $\varphi$ and integrating by parts over $\Omega$,
and using the zero circulation conditions on the boundary
loops and the homogeneous Neumann boundary conditions on the top and
bottom surfaces, we have
\begin{equation}
  \label{eq:4}
  \int_0^1\!\int_\M \left(\nabla\psi \cdot\nabla\varphi +
    \dfrac{\p\psi}{\p z} \dfrac{\p\varphi}{\p z}\right) d\xb
  dz= -\int_0^1 \int_\M q\varphi d\xb dz.
\end{equation}
Defining
\begin{align*}
  B(\psi,\,\varphi) &\equiv   \int_0^1\!\int_\M \left(\nabla\psi
    \cdot\nabla\varphi + \dfrac{\p\psi}{\p z}
    \dfrac{\p\varphi}{\p z}\right) d\xb dz,\\
  l_q (\varphi) &\equiv  -\int_0^1 \int_\M q\varphi d\xb dz,
\end{align*}
one can then write the equation above as
\begin{equation*}
  B(\psi,\,\varphi) = l_q(\varphi). 
\end{equation*}

One defines
\begin{equation}
  \label{eq:5}
  V = \left\{ \psi \in H^1(\Omega)\,\,|\,\, \psi|_{\Gamma_l} =
    \textrm{indep.~of }\xb,\,\,0\leq l \leq L,\textrm{ and } \int_0^1\!\int_\M
    \psi(\xb,z) d\xb dz = 0\right\}, 
\end{equation}
which is equipped with norm
\begin{equation*}
  \| \psi \| = \left(\int_0^1 \int_\M \left(|\nabla\psi|^2 +
      \left| \dfrac{\p\psi}{\p z}\right|^2 \right) d\xb
    dz\right)^{\frac{1}{2}}. 
\end{equation*}
Given  the zero-average condition on
$\psi$, this norm makes $V$ a complete Hilbert space. One also defines
\begin{equation*}
  H = \left\{ q\in L^2(\Omega)\,\,\left|\,\,\int_0^1\!\int_\M q(\xb,z) d\xb
      dz \right. = 0\right\}. 
\end{equation*}
This space is a Hilbert space under the usual $L^2$ norm. 
A weak formulation of the elliptic boundary value problem \eqref{eq:2}
can then be stated as follows:
\begin{quote}{\it
  For a given $q\in H$, find $\psi\in V$ such that
  \begin{equation}
      \label{eq:6}
    B(\psi,\,\varphi) = l_q (\varphi),\qquad \forall\,\,\varphi \in
    V. 
  \end{equation}}
\end{quote}

One notes that the bilinear form $B(\cdot,\,\cdot)$ is bounded and
coercive on the space $V$, and $l_q$, for any given $q\in H$,  is a
bounded linear functional on $V$. By the classical Lax-Milgram
Theorem, the weak problem has a unique solution in $V$.

In order to recover the PDE and the boundary conditions from
the weak formulation, one integrates by parts on the left-hand side of
\eqref{eq:4}, and obtains
\begin{multline*}
  -\int_0^1 \int_\M \left(\Delta \psi + \dfrac{\p}{\p z}\left(
       \dfrac{\p \psi}{\p z} \right) - q\right) \varphi d\xb
  dz + \sum_{l=0}^L \int_0^1 \left(\int_{\Gamma_l}
  \dfrac{\p\psi}{\p\nb} ds\right) \left.\varphi\right|_{\Gamma_l}  dz \\ {}+
\int_\M 
  \left.\dfrac{\p\psi}{\p
      z}\right|_{z=1} \varphi(x,1) d\xb- \int_\M
  \left.\dfrac{\p\psi}{\p
      z}\right|_{z=0} \varphi(x,0) d\xb = 0.
\end{multline*}
One notes that the equation must hold for arbitrary test function
$\varphi$ from $V$, and this implies that each individual
integral from the above must vanish. we have the following
\begin{theorem}
  For every $q\in H$, the weak formulation \eqref{eq:6} has a unique solution $\psi$ in
  $V$, and the 
  solution satisfies the elliptic PDE in the distribution sense,
  the homogeneous Neumann boundary conditions at the top and bottom in
  the sense that
  \begin{equation*}
\int_\M
  \left.\dfrac{\p\psi}{\p
      z}\right|_{z} \varphi(\xb,z) d\xb = 0,\qquad \textrm{for } \forall\,\varphi \in V,\,\, z =
  0\textrm{ or } 1,
\end{equation*}
and the circulation restrictions along each boundary loop $\Gamma_l$,
$0\leq l\leq L$,  in the sense
that
\begin{equation*}
\int_0^1 \left(\int_{\Gamma_l}
  \dfrac{\p\psi}{\p\nb} ds\right) \varphi|_{\Gamma_l}   dz = 0,\qquad \forall \,\varphi \in V.   
\end{equation*}
\end{theorem}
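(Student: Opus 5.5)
The argument has two parts: existence and uniqueness come from Lax--Milgram on $V$, and the PDE and boundary conditions are read off by testing \eqref{eq:6} against successively larger classes of test functions.

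\textbf{Functional setting.} First I check that $V$ is a closed subspace of $H^1(\Omega)$. The trace map $H^1(\Omega)\to H^{1/2}(\p\Omega)$ is continuous, so the condition ``the trace on $\Gamma_l\times(0,1)$ is independent of $\xb$'' is preserved under $H^1$ limits. The zero-mean condition is also closed. On $V$ the Poincar\'e--Wirtinger inequality for mean-zero $H^1$ functions gives $\|\psi\|_{L^2}\le C\|\psi\|$, with $\|\cdot\|$ the gradient seminorm defined above. Hence $\|\cdot\|$ is equivalent to the $H^1$ norm on $V$, and $V$ is a Hilbert space.

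\textbf{Lax--Milgram.} The form $B$ is the inner product of $V$, so it is bounded with constant $1$ and coercive with $B(\psi,\psi)=\|\psi\|^2$. For $q\in H$, $|l_q(\varphi)|\le\|q\|_{L^2}\|\varphi\|_{L^2}\le C\|q\|_{L^2}\|\varphi\|$, so $l_q\in V'$. Lax--Milgram (here simply the Riesz representation theorem) gives a unique $\psi\in V$ solving \eqref{eq:6}.

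\textbf{Recovering the PDE and boundary conditions.}
\begin{enumerate}
\item \emph{Interior equation.} Take $\varphi\in C_c^\infty(\Omega)$. Such $\varphi$ need not have zero mean, so I replace it by $\varphi-\bar\varphi$ with $\bar\varphi$ its average, which lies in $V$. Since $B$ annihilates constants and $\int_\Omega q=0$ for $q\in H$, the identity \eqref{eq:6} also holds for $\varphi$ itself. It therefore extends to all of $\tilde V=\{\varphi\in H^1:\ \varphi|_{\Gamma_l}\text{ indep. of }\xb\}$, which contains $C_c^\infty(\Omega)$. This gives $\Delta\psi+\p_z^2\psi=q$ in $\mathcal D'(\Omega)$, so $\Delta\psi+\p_z^2\psi\in L^2(\Omega)$.
\item \emph{Boundary identity.} With the volume integral gone, the integrated-by-parts identity displayed before the theorem reduces to a boundary identity. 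Its terms are the circulation term on the lateral boundary and the top and bottom terms.
\item \emph{Top and bottom conditions.} Choose $\varphi\in\tilde V$ supported near $z=1$, vanishing near the lateral boundary and near $z=0$. Only the $z=1$ term survives, which gives the Neumann statement at $z=1$ for such $\varphi$, and symmetrically at $z=0$. To reach all $\varphi\in V$, I multiply a general $\varphi$ by smooth vertical cutoffs $\chi(z)$ equal to $1$ near $z=1$ and $0$ near $z=0$. This separates the top, bottom and lateral contributions. Because $\chi$ depends only on $z$, $\chi\varphi$ still has $\xb$-independent lateral traces.
\item \emph{Lateral circulation condition.} Once the top and bottom terms are known to vanish, the remaining lateral sum is zero for all $\varphi$. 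To isolate a single loop, I take $\varphi$ of the form $\eta_l(\xb)\,g(z)$, where $\eta_l$ equals $1$ near $\Gamma_l$ and is supported away from the other loops. This gives the stated circulation condition for each $\Gamma_l$.
\end{enumerate}

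\textbf{Main obstacle.} The delicate point is justifying the integration by parts itself, since $\psi$ is only in $H^1$ and $\p\psi/\p\nb$, $\p_z\psi|_{z=0,1}$ are not a priori functions. I would handle this with the generalized Green formula. Since $\psi\in H^1$ and the full Laplacian $\Delta\psi+\p_z^2\psi\in L^2$, the normal derivative $\p\psi/\p\nb$ exists as an element of $H^{-1/2}(\p\Omega)$, and Green's formula holds against traces in $H^{1/2}(\p\Omega)$. All boundary integrals in the statement are then interpreted as these duality pairings. The pieces of $\p\Omega$ are separated using the cutoffs above, which keeps every test function in $V$. The loop integral $\int_{\Gamma_l}\p\psi/\p\nb\,ds$ is read as the pairing with a trace that is constant in $\xb$, and that is exactly what the statement asserts.
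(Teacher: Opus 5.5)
Your Lax--Milgram step and your recovery of the interior equation (testing with $\varphi-\bar\varphi$, using $\int_\Omega q=0$, and extending the identity to $\tilde V$) are correct. They agree with the paper, which after Lax--Milgram integrates by parts formally and simply asserts that each boundary integral vanishes because $\varphi$ is arbitrary.

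The gap is in the justification you add for that assertion, and it sits at the edges $\p\M\times\{0,1\}$. A vertical cutoff does not separate the top face from the lateral face. For $\varphi\in V$, the function $\chi(z)\varphi$ still has lateral trace $\chi(z)\,\varphi|_{\Gamma_l}(z)$, which is nonzero for $z$ near $1$. Testing with it therefore only gives ``top term $+$ lateral term on $\operatorname{supp}\chi$ $=0$''. To kill the lateral term you also need a horizontal cutoff near $\p\M$, and then you only get the Neumann identity against top traces that vanish near $\p\M$.

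Your duality interpretation cannot close this. If $\varphi\in V$ is, say, a nonzero constant near the edge, the zero extension of $\varphi(\cdot,1)$ from the top face is not in $H^{1/2}(\p\Omega)$, since it jumps across a curve. So $\int_\M \p_z\psi|_{z=1}\,\varphi(\xb,1)\,d\xb$ is not a piece of the $H^{-1/2}(\p\Omega)$--$H^{1/2}(\p\Omega)$ pairing, and is not even defined for general $\varphi\in V$. The same holds for the lateral term when $\varphi|_{\Gamma_l}(z)$ does not vanish at $z=0,1$. Step 4 inherits the problem, because it starts from ``once the top and bottom terms are known to vanish''.

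The missing ingredient is regularity in $z$, obtained as follows.
\begin{enumerate}
\item Extend $\psi$ and $q$ evenly across $z=0$ and then $2$-periodically. Splitting test functions into even and odd parts shows that the extension solves the same weak problem on $\M\times(\mathbb{R}/2\mathbb{Z})$. The test functions are all $H^1$ functions whose lateral traces depend on $z$ alone, and this class is invariant under $z$-translations.
\item Difference quotients in $z$ are therefore admissible test functions, and they give $\p_z\psi\in H^1(\Omega)$.
\item Since $\p_z$ of the extension is odd about $z=0$ and $z=1$, its traces there vanish. Hence $\p_z\psi|_{z=0,1}=0$ in $H^{1/2}(\M)$, and the top and bottom identities hold literally for every $\varphi\in V$.
\item Then $\Delta\psi=q-\p_z^2\psi\in L^2(\Omega)$. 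For a.e.\ $z$, the two-dimensional Green formula on the slice, with your cutoff $\eta_l$, defines $c_l(z)=\int_{\Gamma_l}\p_{\nb}\psi(\cdot,z)\,ds=\int_\M(\nabla\psi\cdot\nabla\eta_l+\Delta\psi\,\eta_l)\,d\xb$, and $c_l\in L^2(0,1)$.
\item The weak formulation becomes $\sum_l\int_0^1 c_l(z)\,\varphi|_{\Gamma_l}(z)\,dz=0$ for all $\varphi\in\tilde V$. Your choice $\varphi=\eta_l(\xb)g(z)$ then gives $c_l=0$ a.e., which implies the stated lateral condition.
\end{enumerate}

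Without this regularity, the cheap observation is that every $\varphi\in H^1(\Omega)$ with zero lateral trace lies in $\tilde V$. So the boundary functional factors through restriction to $\p\M\times(0,1)$. That gives a consistent reading in which the whole boundary functional lives on the lateral face, but the top and bottom identities then hold only by convention, not as statements about $\p_z\psi|_{z=0,1}$.
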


\vspace{3mm}
\noindent{\bf Solution by Green's function}\\
Construction and specification of the Green's function
$G^{\xb,z}(\xib,\eta)$ for the elliptic boundary value problem
\eqref{eq:2} are given in Appendix \ref{sec:green}. 
The solution to system \eqref{eq:2} can be expressed, using this
Green's function,
\begin{equation}
  \label{eq:17}
  \psi(\xb,z) = \int_\Omega G^{\xb,z}(\xib, \eta) q(\xib,\eta) d \xib d \eta. 
\end{equation}

In the case \eqref{eq:43a} of non-zero-average PV and  non-homogeneous
boundary conditions, the 
formula \eqref{eq:17}  needs to be expanded to include the boundary
integrals, and becomes
\begin{equation}
    \label{eq:18}
    \psi(\xb,z) = \int_\Omega G^{\xb,z}(\xib, \eta) q(\xib,\eta) d \xib d \eta - 
    \sum_{l=0}^L \int_0^1 \left.G^{\xb,z}(\xib,\eta)\right|_{\Gamma_l}  c_l(\eta) d\eta. 
\end{equation}

\noindent{\bf Quasi-Lipschitz continuity of the velocity field}\\
In the following, we focus on the regularity of the solution, in particular, the
quasi-Lipschitz continuity of its gradient field field.
This kind of regularity is typically established using the
the expression \eqref{eq:17} and estimates on the Green's function. 
Compared with the standard elliptic boundary value problem with, e.g.,
homogeneous Dirichlet boundary conditions
(\cite{Gilbarg1983-pq,Gr-uter1982-fy}), our 
non-standard BVP \eqref{eq:2} presents an extra layer of difficulty
associated with the sharp edges between top/bottom surfaces and the lateral
boundary. This difficulty, however, is essentially circumvented in the
construction of the Green's function $G^{\xb,z}(\xib,\eta)$, where the
boundary problem and domain are expanded over the $z$-axis, with
periodic boundary conditions in this direction.  More details can be found in
Appendix \ref{sec:green}.  

The regularity results on the streamfunction, needed later, are
summarized as follows. 
\begin{theorem}\label{lem:elliptic-reg}
  Assume that the lateral side of the cylinder is smooth with $\p\M$
  being at least piecewise $C^1$, and that $q\in L^\infty(\Omega)$. Let $\psi$
  be the unique solution to the boundary value problem
  \eqref{eq:2}. Then
  the first derivatives of $\psi$ are
    quasi-Lipschitz continuous in the sense that
    \begin{align}
     \left| \nabla_3 \psi(\xb_1,z_1) - \nabla_3 \psi(\xb_2,z_2)
   \right| &\leq C(\Omega, |q|_\infty) \lambda(d),  \label{eq:214j}
    \end{align}
where $d = \sqrt{|\xb_1- \xb_2|^2 + (z_1 - z_2)^2}$, and
  \begin{equation*}
    \lambda(d) = \left\{
      \begin{aligned}
        &(1-\log d) d,& 0< d < 1&,\\
        &1, & d \ge 1&.
      \end{aligned}\right.
  \end{equation*}
 \end{theorem}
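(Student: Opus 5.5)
The proof goes through the Green's function representation \eqref{eq:17} together with the construction in Appendix~\ref{sec:green}. There the cylinder is reflected evenly across $z=0$ and $z=1$ and extended periodically in $z$ with period $2$, which turns the Neumann condition on the top and bottom into a periodicity condition. We then have $\psi(\xb,z)=\int_\Omega G^{\xb,z}(\xib,\eta)q(\xib,\eta)\,d\xib d\eta$, and by the even reflection this equals an integral over $\M\times\mathbb{R}/2\mathbb{Z}$ of $\Gper$ against the even extension $\tilde q$, with $|\tilde q|_\infty=|q|_\infty$. The sharp edges $\p\M\times\{0,1\}$ then disappear: the extended domain is an infinite cylinder $\M\times\mathbb{R}$ whose lateral boundary is as smooth as $\p\M$, and there are no corners left.

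Step 1: decompose the Green's function. I will write $\Gper(\xib,\eta)=\Gamma(\xb-\xib,z-\eta)+h^{\xb,z}(\xib,\eta)$. Here $\Gamma(\bs y)=-\frac{1}{4\pi|\bs y|}$ is the three-dimensional fundamental solution, summed over the periodic images $z-\eta+2k$. The periodized sum converges once we subtract the mean, which is allowed because of \eqref{eq:3}. The remainder $h$ is harmonic in the interior. It carries the "constant on each $\Gamma_l$, zero circulation" conditions, plus finitely many harmonic-measure type correctors $\omega_l(\xb,z)$, one per loop, which enforce the circulation constraints. The estimates I need are the standard ones:
\[
|\nabla_3 G|\le C|\bs y|^{-2},\qquad |\nabla_3^2 G|\le C|\bs y|^{-3},
\]
where $\bs y=(\xb-\xib,z-\eta)$, uniformly up to the lateral boundary. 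These are analogues of the Grüter--Widman bounds \cite{Gr-uter1982-fy} for the Dirichlet Green's function. The correctors contribute smooth, hence Lipschitz, terms because there are finitely many loops and they are fixed.

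Step 2: run the classical Yudovich/Marchioro--Pulvirenti estimate (\cite{Marchioro1994-yt}) in three dimensions. Take two points $P_1=(\xb_1,z_1)$ and $P_2=(\xb_2,z_2)$ at distance $d<1$, and let $B=B(P_1,2d)$. Write
\[
\nabla_3\psi(P_1)-\nabla_3\psi(P_2)=\int_{B}\left(\nabla_3 G^{P_1}-\nabla_3 G^{P_2}\right)\tilde q+\int_{B^c}\left(\nabla_3 G^{P_1}-\nabla_3 G^{P_2}\right)\tilde q .
\]
The first integral is bounded by $|q|_\infty\int_{B(P_i,3d)}C|\bs y|^{-2}\le C|q|_\infty d$. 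For the second, the mean value theorem gives $|\nabla_3 G^{P_1}-\nabla_3 G^{P_2}|\le Cd\,|P-P_1|^{-3}$ on $B^c$. Integrating over $2d<|P-P_1|<C(\Omega)$ gives $C|q|_\infty d(1-\log d)$. For $d\ge1$ the bound follows from $|\nabla_3\psi|_\infty\le C|q|_\infty$, which comes from the first-derivative estimate integrated over the bounded period cell. Together these give \eqref{eq:214j}.

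The main obstacle is Step 1, specifically the uniform second-derivative bound $|\nabla^2_3 G|\le C|\bs y|^{-3}$ near the lateral boundary when $\p\M$ is only piecewise $C^1$. At corners of $\M$ this bound can fail. Even in the 2D Dirichlet problem one needs $C^{1,\alpha}$ boundaries or convex corners. I would first prove the estimate for $\p\M\in C^{1,\alpha}$. I would do this by flattening the boundary and using Schauder estimates in the extended cylinder, where the constant-on-loop condition reduces to a Dirichlet condition up to the finitely many constants. Then, if needed, I would restrict the corner case to points away from the corners, or assume the smoothness that the hypothesis "lateral side smooth" implicitly provides.
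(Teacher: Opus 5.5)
Your Step~2 is essentially the paper's Lemma~\ref{lem2}, and your Step~1 is the content of Lemma~\ref{lem1}, which the paper asserts without proof. So the route is the same, and the weak point is Step~1, but not for the corner reason you flag. Your claim that the loop conditions are enforced by finitely many fixed, smooth correctors is the planar picture, and it fails here. In \eqref{eq:2}, constancy on $\Gamma_l$ and zero circulation are imposed separately at every height, so the loop values of $G$ are functions of $\eta$ that depend on the source point. In vertical cosine modes there is one corrector per loop \emph{per mode}. My heuristic estimate is that the $k$-th is of size about $e^{-k\pi(\delta_l(\xb)+\delta_l(\xib))}/(k\pi|\Gamma_l|)$, with $\delta_l$ the distance to $\Gamma_l$. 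Their sum is a planar logarithm in the variables (distance to $\Gamma_l$, height), singular along the whole ring $\Gamma_l\times\{z\}$ as the source approaches $\Gamma_l$. Consequently the bounds $|\nabla_3G|\le Cr^{-2}$ and $|\nabla_3^2G|\le Cr^{-3}$, with $r$ the distance to the source, are false near the lateral wall however smooth $\p\M$ is. The same objection applies to Lemma~\ref{lem1} as stated.

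To see this concretely, take $\M=\{1<|\xb|<2\}$ and $\rho=|\xb|$, and let $\bar\psi(\rho,z)$ be the angular mean of $\psi$. The lateral conditions in \eqref{eq:2} say exactly that $\psi-\bar\psi=0$ on $\p\M$ and $\p_\rho\bar\psi=0$ at $\rho=1,2$. Hence $\psi$ is the lateral-Dirichlet solution with data $q$ plus $\bar\psi_N-\bar\psi_D$. Here $\bar\psi_N$ and $\bar\psi_D$ solve the planar problems for $\p_\rho^2+\rho^{-1}\p_\rho+\p_z^2$ on $(1,2)\times(0,1)$ with data $\bar q$, with Neumann and Dirichlet conditions respectively at $\rho=1,2$. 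Accordingly $G=G_D+K$. The lateral-Dirichlet kernel $G_D$ does obey the power bounds. The kernel $K$ depends only on $(|\xb|,z,|\xib|,\eta)$, and by reflection across $\rho=1$,
\[
K=\frac{1}{2\pi^2}\log\sqrt{(|\xb|+|\xib|-2)^2+(z-\eta)^2}+R ,
\]
with $R$ less singular. Take $\xib=-\xb$, $|\xb|=1+\epsilon/2$, $z-\eta=\epsilon$ at interior heights. The two points are more than $2$ apart, yet $|\p_zG|\sim(4\pi^2\epsilon)^{-1}\to\infty$. The singular set is the whole ring $\Gamma_l\times\{z\}$, not the source point, so it does not lie in $B(P_1,2d)$, and your far-field mean-value bound misses it.

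In this example \eqref{eq:214j} still holds, but only because $\bar\psi_N-\bar\psi_D$ is covered by the planar Yudovich estimate. A proof for general $\M$ has to isolate this ring part, treat it as a planar log kernel in the cross-sectional variables, and run your Step~2 only on $G_D$.

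Separately, flattening a $C^{1,\alpha}$ boundary and applying Schauder gives only $C^{1,\alpha}$ regularity up to $\p\M$. That yields an $O(d^\alpha)$ far-field bound rather than $\lambda(d)$; the $r^{-3}$ bound for $G_D$ needs roughly a $C^{2,\alpha}$ lateral boundary.
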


With the edge issue taken care of,  the proof of this regularity
result  is
similar to the two-dimensional case 
(\cite{Yudovich1963-bj,Kato1967-cj}), with only minor changes
needed. Details are given in Appendix \ref{sec:lipschitz}.





When the boundary conditions are non-homogeneous, then the boundary
data will have to satisfy certain compatibility conditions along the
sharp edge to ensure
higher regularity of the solution. In our case, if the streamfunction
satisfies the homogeneous Neumann boundary conditions on the top and
bottom, but non-homogeneous constant circulation condition along each
horizontal boundary loops (\eqref{eq:43a}), then the following
compatibility conditions will have to be satisfied to ensure higher
regularity of the solution,
\begin{equation}
  \label{eq:8}
  \dfrac{\p}{\p z} c_l(z) = 0, \qquad z = 0,\,1,\,\, 0\leq l\leq L. 
\end{equation}
See \cite{Novack2020-gi} for a case of more complex boundary
conditions on the top and bottom. In our case, the circulations are
assumed to be homogeneous, and thus the compatibility conditions are
satisfied automatically.

\section{Existence and uniqueness of a weak solution to the 3D
  QG}\label{s:exist} 
The center piece of this analysis is the so-called
flowmap, denoted as
\begin{equation*}
  \Phi_{z,t}:\,\M\longrightarrow \M. 
\end{equation*}
The parameters $z\in(0,\,1)$ and $t\in\mathbb{R}$ refer to the
vertical level and time, respectively. The flow map at each $z$-level
is defined by the incompressible horizontal velocity field $\ub$: for
each $\ab\in\M$, $\Phi_{z,t}(\ab)$ solves the initial value problem
\begin{equation}
  \label{eq:20}
  \left\{
    \begin{aligned}
      \dfrac{\p\Phi_{z,t}(\ab)}{\p t} &= \ub(\Phi_{z,t}(\ab),\,z,\,t),\\
        \Phi_{z,0}(\ab) &= \ab. 
    \end{aligned}\right.
\end{equation}
Equivalently, the IVP can also be expressed as
\begin{equation}
  \label{eq:21}
  \Phi_{z,t}(\ab) = \ab + \int_0^t 
  \ub(\Phi_{z,\tau}(\ab),\,z,\,\tau)d\tau,\qquad\forall\,\ab\in\M. 
\end{equation}

With the flow map, the potential vorticity can be expressed using its
initial state,
\begin{equation}
  \label{eq:26}
  q(\xb,\,z,\,t) = q_0(\Phi_{z,-t}(\xb),\,z),
\end{equation}
where $\Phi_{z,-t}$ can be interpreted as the inverse of
$\Phi_{z,t}$. It is easily verified that 
the PV defined this way satisfies the QG equation \eqref{eq:11}, for
\begin{align*}
  0 &= \dfrac{\p }{\p t} q_0(\ab,\,z,\,t)\\
    &= \dfrac{\p }{\p t} q(\Phi_{z,t}(\ab),\,z,\,t) \\
  &= \dfrac{\p}{\p t} q + \ub\cdot\nabla q.
\end{align*}

Thus, if $(q,\,\ub)$ is a smooth solution of the QG system
\eqref{eq:11}, \eqref{eq:13}, \eqref{eq:2}, and \eqref{eq:14}, then
$(q,\,\ub,\,\Phi)$ satisfies the equations \eqref{eq:13},
\eqref{eq:2}, \eqref{eq:20}, and \eqref{eq:26}
simultaneously. Conversely, if $(q,\,\ub,\,\Phi)$ is a smooth
solution of \eqref{eq:13},
\eqref{eq:2}, \eqref{eq:20}, and \eqref{eq:26}, then
$(q,\,\ub)$ solves the original QG system as well. However,
there is no guarantee that the PV defined using the flow map in
\eqref{eq:26} is differentiable in space and/or time. Hence the
system of equations \eqref{eq:13},
\eqref{eq:2}, \eqref{eq:20}, and \eqref{eq:26} generalizes the
original QG system of \eqref{eq:11}, \eqref{eq:13},  \eqref{eq:2}, and
\eqref{eq:14}, and the solution of the former is a weak solution
of the latter.

In the following, we study the well-posedness of the generalized QG
system. The main result is summarized in
\begin{theorem}\label{t:1}
  Let $q_0(\xb,\,z)\in L^\infty(\Omega)$ satisfying
  \eqref{eq:3}. There exists a unique 
  solution $(q,\,\ub,\,\Phi)$ to the generalized QG system \eqref{eq:13},
\eqref{eq:2}, \eqref{eq:20}, and \eqref{eq:26}. 
\end{theorem}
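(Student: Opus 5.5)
We follow the Yudovich/Kato scheme for the 2D Euler equation (see also \cite{Marchioro1994-yt}), with the vertical coordinate $z$ treated as a parameter in the flow map. The key input is Theorem \ref{lem:elliptic-reg}. It shows that whenever $|q(\cdot,t)|_\infty\le |q_0|_\infty$, the velocity $\ub=\nabla^\perp\psi$ is bounded. It is also quasi-Lipschitz in all three variables, with constant $C(\Omega,|q_0|_\infty)$ and modulus $\lambda(d)$. Since $\ub$ is tangent to each $\Gamma_l$ (because $\psi$ is constant on each $\Gamma_l$) and divergence-free horizontally, every flow map $\Phi_{z,t}$ is a measure-preserving homeomorphism of $\M$. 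Hence $q=q_0\circ\Phi_{z,-t}$ keeps its $L^\infty$ norm. It also keeps the zero horizontal average \eqref{eq:3}, so $q(\cdot,t)\in H$ and the elliptic problem \eqref{eq:2} stays solvable. This a priori bound closes the loop.

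\emph{Existence.} I would set up a Picard iteration. Put $\ub^0(\xb,z,t)=\ub$ computed from $q_0$. Given a velocity $\ub^n$ with the above properties, define $\Phi^n_{z,t}$ by \eqref{eq:20}. The ODE is uniquely solvable for each $z$ because $\int_0 dr/\lambda(r)=\infty$ (Osgood's criterion). Then set $q^{n+1}=q_0(\Phi^n_{z,-t}(\xb),z)$ and let $\ub^{n+1}=\nabla^\perp\psi^{n+1}$, where $\psi^{n+1}$ solves \eqref{eq:2} via \eqref{eq:17}.

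Uniform bounds follow from the remarks above. I would then show that $\Phi^n$ is Cauchy in $C([0,T];L^1(\Omega))$. Write $\Phi^{n+1}-\Phi^n$ via \eqref{eq:21} and split $\ub^{n+1}(\Phi^{n+1})-\ub^n(\Phi^n)$ into two pieces. The first is $\ub^{n+1}(\Phi^{n+1})-\ub^{n+1}(\Phi^n)$, controlled by $\lambda(|\Phi^{n+1}-\Phi^n|)$ together with Jensen's inequality, since $\lambda$ is concave. The second is $\ub^{n+1}(\Phi^n)-\ub^{n}(\Phi^n)$. For it we use the Green's function representation to write
\[
\ub^{n+1}-\ub^n=\int \nabla^\perp G\,\bigl(q_0(\Phi^{n}_{-t})-q_0(\Phi^{n-1}_{-t})\bigr).
\]
Changing variables with the measure-preserving maps gives
\[
\int_\Omega q_0(\ab,\eta)\,\bigl[\nabla G(\cdot,\Phi^n(\ab))-\nabla G(\cdot,\Phi^{n-1}(\ab))\bigr]\,d\ab\,d\eta .
\]
Its $L^1$ norm is bounded by $C\lambda(\|\Phi^n-\Phi^{n-1}\|_{L^1})$, using the standard Green's function estimate $\int|\nabla G(x,y_1)-\nabla G(x,y_2)|\,dx\le C\lambda(|y_1-y_2|)$ from Appendix \ref{sec:green}/\ref{sec:lipschitz}. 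These combine into an Osgood-type integral inequality. For small $T$ it yields $\|\Phi^{n+1}-\Phi^n\|\le (CT)^{\cdot}$-type decay, and in the Yudovich manner a decay like $\rho_n(t)\le (Ct)^{\exp(-Ct)\cdot}$. Iterating over time intervals whose length depends only on $|q_0|_\infty$ gives global existence. Passing to the limit in \eqref{eq:21} and \eqref{eq:26} is then straightforward, using equicontinuity of $\Phi^n$ and the uniform quasi-Lipschitz bound.

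\emph{Uniqueness.} Take two solutions and apply the same estimate to $\rho(t)=\int_\Omega|\Phi^1_{z,t}-\Phi^2_{z,t}|$. We get $\rho(t)\le C\int_0^t\lambda(\rho(s))\,ds$ with $\rho(0)=0$, and Osgood's lemma forces $\rho\equiv0$.

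The main obstacle is the Green's function difference estimate in three dimensions with the mixed lateral/top-bottom conditions. In the 2D case it comes from $|\nabla G|\lesssim |x-y|^{-1}$ and $|\nabla^2G|\lesssim|x-y|^{-2}$. Here one needs the 3D analogues $|\nabla_3 G|\lesssim|x-y|^{-2}$ and $|\nabla_3^2G|\lesssim|x-y|^{-3}$, uniformly near the edges. I would obtain these from the even reflection and periodic extension in $z$ used in Appendix \ref{sec:green}, and then repeat the proof of Theorem \ref{lem:elliptic-reg}.
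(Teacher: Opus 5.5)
Your proposal is correct and follows essentially the same route as the paper. The shared ingredients are a Kato/Marchioro--Pulvirenti iteration on the flow maps, the split of $\ub^{n+1}(\Phi^{n+1})-\ub^n(\Phi^n)$ into two pieces, Jensen's inequality for the concave $\lambda$, the Green's-function difference estimate integrated over the evaluation point, a short time step depending only on $|q_0|_\infty$ (which the measure-preserving flow conserves), and the same two-piece estimate for uniqueness. The only difference is cosmetic: you close uniqueness directly with Osgood's lemma, whereas the paper uses $\lambda(d)\le -d\log\epsilon+\epsilon$, Gronwall on a short interval, and $\epsilon\to 0$, which amounts to the same argument.
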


We follow Kato (\cite{Kato1967-cj}) and Marchioro \& Pulvirenti
(\cite{Marchioro1994-yt}), and prove this result using an iterative
procedure. We let
\begin{equation*}
  q^0(\xb,\,z,\,t) =
  q_0(\xb,\,z),\qquad\forall\,(\xb,\,z)\in\Omega,\,t\ge 0,
\end{equation*}
where $q_0$ is the given initial state of the system. Once $q^n$ is
known, we find its next iteration, $q^{n+1}$, via the following\\

\noindent{\bf Iterative Procedure}
\begin{enumerate}
\item Solve the elliptic system \eqref{eq:2} for $\psi^n$, with $q$
  taken to be $q^n$.
\item Set $\ub^n = \nabla^\perp \psi^n$.
\item Solve the initial value problem \eqref{eq:20} for
  $\Phi^n_{z,t}$, with $\ub$ taken to be $\ub^n$.
\item Update the PV by
  \begin{equation}
    \label{eq:44}
  q^{n+1}(\xb,\,z,\,t) = q_0(\Phi^n_{z,-t}(\xb),\,z). 
  \end{equation}
\end{enumerate}

Assuming that each of these steps is well defined, repeating this process
generates a sequence of triplets of functions,
$\{(q^n,\,\ub^n,\,\Phi^n_{z,t})\}_{n\ge 0}$. The well-posedness of the
system is established once we show that the sequence converges to a
unique triplet $(q,\, \ub,\, \Phi_{z,t})$ that solves the QG system.

The well-posedness of the elliptic system \eqref{eq:2} has been
established in Section \ref{s:elliptic}, together with the
quasi-Lipschitz continuity of the gradient field. Thus Steps 1 \& 2 in
the above are well defined, provided that the initial data are
sufficiently smooth. Regarding Step 3 and the initial value problem
\eqref{eq:20} or \eqref{eq:21},  Kato and Marchioro \& Pulvirent pointed
out in the 2D Euler case that the quasi-Lipschitz continuity of the gradient
(velocity) field is sufficient to ensure the global well-posedness of
the initial value problem defining the flow map. It turns out that the
same is also true for the case of the 3D QG equation, because the
velocity field of the 3D system remains two-dimensional (horizontal),
and is defined by the horizontal skew-gradient of the
streamfunction. Specifically, we have
\begin{lemma}\label{l:1}
  Under the condition that $\ub(\xb,\,z,\,t)$ is continuous in $z$ and
  $t$, and quasi-Lipschitz continuous in $\xb$, the initial value
  problem \eqref{eq:20} has a unique solution $\Phi_{z,t}(\ab)$ for all
  $\ab\in\M$, $z\in(0,\,1)$, and $t>0$. 
\end{lemma}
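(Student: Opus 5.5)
The plan is to treat $z$ as a frozen parameter, so that \eqref{eq:20} becomes, for each fixed $z\in(0,1)$, a planar non-autonomous ODE $\dot X = \ub(X,z,t)$ on $\M$. Existence will come from Peano, uniqueness from an Osgood-type argument, and globality from the tangency of $\ub$ to $\p\M$. The quasi-Lipschitz modulus I use is the one from Theorem \ref{lem:elliptic-reg}: $|\ub(\xb_1,z,t)-\ub(\xb_2,z,t)|\le C\lambda(|\xb_1-\xb_2|)$, uniformly in $z,t$. In particular $\ub$ is bounded, say by $U$, since $\lambda\le 1$ away from small distances and $\M$ is bounded.

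\textbf{Existence.} First extend $\ub(\cdot,z,\cdot)$ continuously and boundedly to $\mathbb{R}^2$. Continuity in $\xb$ follows from the quasi-Lipschitz bound, since $\lambda(d)\to0$ as $d\to0$, and continuity in $t$ is assumed. Peano's theorem then gives a local solution, and boundedness by $U$ prevents blow-up, so the solution extends to all $t>0$. It remains to check that trajectories starting in $\overline{\M}$ stay there. On each boundary loop $\Gamma_l$, the streamfunction $\psi$ is constant in $\xb$ by \eqref{eq:2}. Hence $\ub=\nabla^\perp\psi$ is tangent to $\Gamma_l$, so $\ub\cdot\nb=0$ and the boundary is invariant.

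To make invariance rigorous without Lipschitz regularity, I will use a second route. I approximate $\ub$ by mollified fields tangent to $\p\M$, for instance $\nabla^\perp$ of a mollified $\psi$ suitably adjusted near the boundary. The approximate flows are then uniformly equicontinuous in $t$ with constant $U$, and Arzel\`a--Ascoli yields a limit solving \eqref{eq:21} that remains in $\overline{\M}$. Uniqueness, proved next, makes this limit independent of the choice of approximation.

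\textbf{Uniqueness.} Let $X$ and $Y$ be two solutions with the same initial point $\ab$, and set $\rho(t)=|X(t)-Y(t)|$. From \eqref{eq:21},
\begin{equation*}
\rho(t)\le \int_0^t C\lambda(\rho(\tau))\,d\tau .
\end{equation*}
The function $\lambda$ is continuous, nondecreasing and concave near $0$, and
\begin{equation*}
\int_0^{\epsilon}\frac{dr}{r(1-\log r)}=\infty .
\end{equation*}
Osgood's lemma therefore gives $\rho\equiv0$. Explicitly, comparison with $\dot r = Cr(1-\log r)$ yields $1-\log \rho(t)\ge (1-\log\rho(0))e^{-Ct}$, which forces $\rho(t)=0$ when $\rho(0)=0$. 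Running the same estimate with $\rho(0)>0$ also gives the quantitative bound
\begin{equation*}
|\Phi_{z,t}(\ab)-\Phi_{z,t}(\bb)|\le e\,|\ab-\bb|^{e^{-Ct}}
\end{equation*}
for small separations. This bound will be useful later, and it shows the flow map is continuous, indeed H\"older. Reversing time in the same way shows that $\Phi_{z,t}$ is a homeomorphism with inverse $\Phi_{z,-t}$.

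\textbf{Main obstacle.} The delicate step is the boundary: showing that the flow cannot exit $\M$ when $\ub$ is only quasi-Lipschitz and $\p\M$ is merely piecewise $C^1$. Interior Osgood uniqueness is routine. I would first try the distance-function argument: estimate $\frac{d}{dt}\mathrm{dist}(X(t),\Gamma_l)$ using $\ub\cdot\nb=0$ on $\Gamma_l$ together with the modulus $\lambda$, and again obtain an Osgood inequality of the form $\dot\delta\ge -C\lambda(\delta)$, which prevents $\delta$ from reaching $0$ in finite time. Corners of a piecewise-$C^1$ boundary may spoil this estimate locally, and there I would fall back on the approximation argument above.
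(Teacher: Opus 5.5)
Your argument is correct, but it is organized differently from the paper's. The paper runs Picard iterations on \eqref{eq:21}. It replaces the modulus by the linear majorant $\lambda(d)\le-\log\epsilon\,d+\epsilon$, taking $\epsilon=e^{-k}$ at step $k$, to get geometric convergence on an interval with $CT\le e^{-3/2}$. It then restarts on $[T,2T],[2T,3T],\dots$ and proves uniqueness by Gronwall with the same majorant. You instead split the job into Peano (existence), Osgood (uniqueness) and tangency of $\ub$ (invariance of $\M$).

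The paper's scheme is constructive, and it is the template reused for the $\rho^N$ iteration in Lemma~\ref{l:2}. Yours is sharper in two ways. First, Osgood gives uniqueness on all of $[0,\infty)$ at once. The paper's bound $Ct\,\epsilon^{1-Ct}$ tends to $0$ as $\epsilon\to0$ only when $Ct<1$, so its ``for any fixed $t$'' actually needs a restart in time. Second, your comparison directly produces the H\"older bound $e|\ab-\bb|^{e^{-Ct}}$, which the paper later borrows from Marchioro--Pulvirenti in Section~\ref{s:classical}.

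More importantly, you face an issue the paper's proof skips. The Picard iterates need not stay in $\M$: for a rigid rotation on a disk, $|\Phi^1_{z,t}(\ab)|=|\ab|\sqrt{1+t^2}$. So $\ub$ must be extended outside $\M$, and invariance of $\M$ must still be shown. Your invariance step uses $\ub\cdot\nb=0$ on $\p\M$. This is not among the lemma's stated hypotheses, but it holds for $\ub=\nabla^\perp\psi$ with $\psi$ solving \eqref{eq:2}. Without it the statement fails as posed on $\M$, so you are right to add it.

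Your mollification fallback is vague, and you do not need it. At a corner of $\p\M$ with two distinct one-sided tangent lines, continuity of $\ub$ and tangency along both arcs force $\ub=0$. So for any nearest boundary point $P$ to $X(t)\in\M$, smooth point or corner, one has $(X(t)-P)\cdot\ub(P)=0$. Hence the Lipschitz function $\delta(t)=\mathrm{dist}(X(t),\p\M)$ satisfies $\dot\delta\ge-C\lambda(\delta)$ almost everywhere, which is exactly the Osgood inequality you want.
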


\begin{proof}[Proof of Lemma \ref{l:1}]
 We establish the existence and uniqueness of a solution to this
 equation by the classical Piccard iterations. We let $T>0$ to be
 determined later, and for
 $t\in[0,\,T]$, we let
 \begin{equation*}
   \Phi_{z,t}^0(\ab) = \ab,\qquad \forall\,(\ab,\,z)\in\Omega. 
 \end{equation*}
Assuming that $\Phi^k_{z,t}$ is known, we construct its next iteration
over the same time interval $[0,\,T]$ by
\begin{equation}
  \label{eq:27}
  \Phi^{k+1}_{z,t}(\ab) = \ab +
  \int_0^t\ub(\Phi^k_{z,\tau}(\ab),\,z,\, \tau) d\tau. 
\end{equation}
We now show that $\{\Phi^k_{z,t}(\ab)\}_{k\ge 0}$ is a uniformly
converging sequence for $\forall\,(\ab,\,z,\,t)\in\Omega\times[0,\,T]$. From
\eqref{eq:27}, we have
\begin{equation*}
  \Phi^{k+1}_{z,t}(\ab) - \Phi^k_{z,t}(\ab) = \int_0^t\left(
    \ub\left(\Phi^k_{z,\tau}(\ab),\,z,\, \tau\right) -
        \ub\left(\Phi^{k-1}_{z,\tau}(\ab),\,z,\, \tau\right) \right)
      d\tau. 
\end{equation*}
Taking absolute value on both sides and using the quasi-Lipschitz
continuity of $\ub$, we obtain, for any $(\ab,\,z)\in\Omega$,
\begin{equation}\label{eq:37}
  \left|\Phi^{k+1}_{z,t}(\ab) - \Phi^k_{z,t}(\ab) \right| \leq C
  \int_0^t \lambda \left(
    \Phi^k_{z,\tau}(\ab) -\Phi^{k-1}_{z,\tau}(\ab)\right))  d\tau, 
\end{equation}
where the constant $C$ is the same as the constant in \eqref{eq:214j},
and is independent of $(\ab,\,z)$.
We note that, for any parameter $0<\epsilon<1$, the function
$\lambda(d)$ is majorized by a linear function,
\begin{equation}
  \label{eq:28}
  \lambda(d) \leq -\log\epsilon\cdot d + \epsilon, \qquad\forall\,
  d>0. 
\end{equation}
Using this inequality in the estimate above, we have
  \begin{equation}
    \label{eq:29}
  \left|\Phi^{k+1}_{z,t}(\ab) - \Phi^k_{z,t}(\ab) \right| \leq
  -C \log\epsilon 
  \int_0^t \left|
    \Phi^k_{z,\tau}(\ab) -\Phi^{k-1}_{z,\tau}(\ab)\right|  d\tau +
  C\epsilon T. 
    \end{equation}
We set $U = \sup_{(\xb,z)\in\Omega,t\in[0,T]}
|\ub(\xb,\,z,\,t)|$. Then, from \eqref{eq:27}, we find
\begin{equation*}
    \left|\Phi^1_{z,t}(\ab) - \Phi^0_{z,t}(\ab) \right| \leq UT.
\end{equation*}
Using this bound as the starting point in the iteration \eqref{eq:29},
we derive that
\begin{align*}
  \left|\Phi^{k+1}_{z,t}(\ab) - \Phi^k_{z,t}(\ab) \right|
  &\leq \left(-C \log\epsilon\right)^k UT \dfrac{t^k}{k!} + C\epsilon 
    T \sum_{j=0}^{k-1} \dfrac{(-C\log\epsilon)^j t^j}{j!} \\
  &\leq \left(-C \log\epsilon\right)^k UT \dfrac{t^k}{k!} + C\epsilon 
    T e^{-C\log\epsilon t}. 
\end{align*}
We set $\epsilon = e^{-k}$. With the fact that $0\leq t\leq T$, one
has
\begin{equation*}
  \left|\Phi^{k+1}_{z,t}(\ab) - \Phi^k_{z,t}(\ab) \right|
\leq UT \dfrac{(CT k)^k}{k!} + CT e^{-k(1-CT)}. 
\end{equation*}
For the first term on the right-hand side, one uses the Stirling's
formula $k^k/k! \leq e^k$, and arrives at
\begin{equation*}
  \left|\Phi^{k+1}_{z,t}(\ab) - \Phi^k_{z,t}(\ab) \right|
  \leq UT {(CT e)^k} + CT e^{-k(1-CT)}. 
\end{equation*}
One lets $T$ to be such that $CT\leq e^{-3/2}< 1/2$. One then has
\begin{equation*}
  \left|\Phi^{k+1}_{z,t}(\ab) - \Phi^k_{z,t}(\ab) \right|
  \leq (C+U)T  e^{-\frac{k}{2}}. 
\end{equation*}
This estimate holds for arbitrary $(\ab,z)\in\Omega$ and
$t\in[0,T]$. Thus $\{\Phi_{z,t}^k(\ab)\}_{k\ge 0}$ is a uniformly
Cauchy sequence, and converges to a limit denoted as $\Phi_{z,t}(\ab)$
in \textcolor{red}{$C(\bar\Omega\times[0,\,T])$}. Taking the limit in
\eqref{eq:27}, one proves that this limit is indeed a solution to
\eqref{eq:21} and hence to \eqref{eq:20}.

Given that $T>0$ is independent of $\ab$ and $z$, the above process can
be repeated for the interval $[T,\,2T]$, $2T,\,3T]$, etc, and one thus
obtains a solution $\Phi_{z,t}(\ab)$ for \eqref{eq:21}/\eqref{eq:20}
that is valid for all time.

For uniqueness, we assume that $\tilde\Phi_{z,t}$ is another solution
satisfying \eqref{eq:21},
\begin{equation}
  \label{eq:30}
  \tilde\Phi_{z,t}(\ab) = \ab + \int_0^t 
  \ub(\tilde\Phi_{z,\tau}(\ab),\,z,\,\tau)d\tau,\qquad\forall\,\ab\in\M. 
\end{equation}
Taking the difference between this and \eqref{eq:21}, we have
\begin{equation*}
\Phi_{z,t}(\ab) -   \tilde\Phi_{z,t}(\ab) = \int_0^t 
  \left(\ub(\Phi_{z,\tau}(\ab),\,z,\,\tau) -
    \ub(\tilde\Phi_{z,\tau}(\ab),\,z,\,\tau) \right) d\tau. 
\end{equation*}
Taking absolute values on both sides and using the quasi-Lipschitz
continuity of $\ub$ together with the inequality \eqref{eq:28}, we
derive that
\begin{equation*}
\left|\Phi_{z,t}(\ab) -   \tilde\Phi_{z,t}(\ab)\right|  \leq - C
\log\epsilon \int_0^t 
  \left|\Phi_{z,\tau}(\ab) -
    \tilde\Phi_{z,\tau}(\ab)\right| d\tau + C\epsilon t,
\end{equation*}
for an arbitrary $0<\epsilon < 1$. By the Gronwall inequality
(see Lemma \ref{l:gronwall}), one obtains that
\begin{equation*}
  \left|\Phi_{z,t}(\ab) -   \tilde\Phi_{z,t}(\ab)\right|  \leq
   C \epsilon t e^{-C \log\epsilon t}. 
\end{equation*}
For any fixed $t$, one can pick an $\epsilon$ to make the right-hand
side arbitrarily small. Thus this inequality can only hold if
$\Phi_{z,t}$ and $\tilde\Phi_{z,t}$ are identical, and the
uniqueness is proven. 
\end{proof}

From Theorem \ref{lem:elliptic-reg} and Lemma \ref{l:1} we conclude
that Step 3 is well defined, as long as the sequence $q^n$ of the PV
remains uniformly bounded, which is obviously true thanks to
\eqref{eq:44} in Step 4. The next iteration $q^{n+1}$ of the PV 
is well defined, once
$\Phi^n_{z,t}$ is obtained through Step 3.
We now show that the sequence of triplets
$\{(\Phi_{z,t}^n,\,\ub^n,\,q^n)\}_{n\ge 0}$ thus generated by
iterations of Steps 1-4 converges. Specifically, we have
\begin{lemma}\label{l:2}
  There exists a $T>0$ that depends on $\Omega$ and $|q_0|_\infty$
  only, and a unique triplet of functions $(\Phi_{z,t},\, \ub,\, q)$
  such that, as $n\longrightarrow\infty$,
  \begin{align*}
    \int_\Omega \left| \Phi^n_{z,t}(\ab) - \Phi_{z,t}(\ab) \right| d\ab dz 
    &\longrightarrow 0,\qquad \textrm{
    uniformly over }[0,\,T],\\ 
    \int_\Omega \left|\ub^n(\xb,\,z,\,t) - \ub(\xb,\,z,\,t)\right| d\xb dz 
    &\longrightarrow 0,\qquad\textrm{
    uniformly over }[0,\,T],
  \end{align*}
  and, for any $f\in\mathcal{C}(\M)$, 
  \begin{equation*}
  \int_\Omega f(\xb)\left(q^n(\xb,\,z,\,t) - q(\xb,\,z,\,t)\right)  d\xb dz 
    \longrightarrow 0,\qquad \textrm{
    uniformly over }[0,\,T].
  \end{equation*}
\end{lemma}
\begin{proof}
  We consider the sequence of flow maps $\Phi^n_{z,t}$ first. Taking
  the difference of the equation \eqref{eq:21} for the indices $n$ and
  $n-1$, we have, for $\forall\,\ab\in\M$,
  \begin{align*}
    \Phi^n_{z,t}(\ab) - \Phi^{n-1}_{z,t}(\ab)
    &= \int_0^t 
\left(\ub^n(\Phi^n_{z,\tau}(\ab),\,z,\,\tau) -
  u^{n-1}(\Phi^{n-1}_{z,t}(\ab) , z, \tau) \right) d\tau\\
    &= \int_0^t 
\left(\ub^n(\Phi^n_{z,\tau}(\ab),\,z,\,\tau) -
  u^{n}(\Phi^{n-1}_{z,t}(\ab) , z, \tau) \right) d\tau\\
&\hphantom{=} + \int_0^t 
\left(\ub^n(\Phi^{n-1}_{z,\tau}(\ab),\,z,\,\tau) -
  u^{n-1}(\Phi^{n-1}_{z,t}(\ab) , z, \tau) \right) d\tau
  \end{align*}
  Taking absolute values and then volume averages on both sides, we have
  \begin{multline}
      \label{eq:32}
  \dfrac{1}{|\Omega|} \int_\Omega\left|\Phi^n_{z,t}(\ab) -
    \Phi^{n-1}_{z,t}(\ab)\right|  d\ab dz\\
  {}  \leq {} \dfrac{1}{|\Omega|} \int_0^t  \int_\Omega 
\left|\ub^n(\Phi^n_{z,\tau}(\ab),\,z,\,\tau) -
  u^{n}(\Phi^{n-1}_{z,t}(\ab) , z, \tau) \right| d\ab dz  d\tau+ {} \\
\dfrac{1}{|\Omega|} \int_0^t  \int_\Omega 
\left|\ub^n(\xb,\,z,\,\tau) -
  u^{n-1}(\xb , z, \tau) \right| d\xb dz  d\tau.
  \end{multline}
In the above, a change of variable $\xb = \Phi^{n-1}_{z,t}(\ab)$ and
the fact that $\Phi^{n-1}_{z,t}$ conserves area have been used in the
second integral.

For the first integral in \eqref{eq:32}, one uses the quasi-Lipschitz
continuity of the velocity field $\ub^n$, together with the concavity
of the scalar function $\varphi$ and Jensen's inequality,  to derive that
\begin{multline}
  \label{eq:33}
\dfrac{1}{|\Omega|} \int_0^t  \int_\Omega 
\left|\ub^n(\Phi^n_{z,\tau}(\ab),\,z,\,\tau) -
  u^{n}(\Phi^{n-1}_{z,t}(\ab) , z, \tau) \right| d\ab dz  d\tau \leq \\
\int_0^t \lambda\left( \dfrac{1}{|\Omega|} \int_\Omega 
\left|\Phi^n_{z,\tau}(\ab) -\Phi^{n-1}_{z,t}(\ab) \right| d\ab dz\right)  d\tau.
\end{multline}

For the second integral on the right-hand side of \eqref{eq:32}, we
express the velocity fields $\ub^n$ and $\ub^{n-1}$ using the
Green's function $G^{\xb,z}(\cdot,\,\cdot)$ and the PV $q^n$ and
$q^{n-1}$,
\begin{align*}
&\dfrac{1}{|\Omega|} \int_0^t  \int_\Omega 
\left|\ub^n(\xb,\,z,\,\tau) -
                 u^{n-1}(\xb , z, \tau) \right| d\xb dz  d\tau \\
{}={} &\dfrac{1}{|\Omega|} \int_0^t  \int_\Omega 
\left|\int_\Omega \left( \nabla^\perp_{\xb}  G^{\xb,z}(\xib,\,\eta)
 q^n(\xib,\eta,\tau) - \nabla^\perp_{\xb} G^{\xb,z}(\xib,\,\eta)
 q^{n-1}(\xib,\eta,\tau)\right) d\xib d\eta 
         \right| d\xb dz  d\tau \\
{}={} &\dfrac{1}{|\Omega|} \int_0^t  \int_\Omega 
\left|\int_\Omega \left( \nabla^\perp_{\xb}  G^{\xb,z}(\xib,\,\eta)
 q_0(\Phi^{n-1}_{\eta,-\tau}(\xib),\eta) - \nabla^\perp_{\xb} G^{\xb,z}(\xib,\,\eta)
 q_0(\Phi^{n-2}_{\eta,-\tau}(\xib),\eta)\right) d\xib d\eta 
         \right| d\xb dz  d\tau \\
{}={} &\dfrac{1}{|\Omega|} \int_0^t  \int_\Omega 
\left|\int_\Omega \left( \nabla^\perp_{\xb}
        G^{\xb,z}(\Phi^{n-1}_{\eta,\tau}(\bb),\,\eta)
  - \nabla^\perp_{\xb} G^{\xb,z}(\Phi^{n-2}_{\eta,\tau}(\bb),\,\eta)\right)
 q_0(b,\eta) d\bb d\eta 
         \right| d\xb dz  d\tau \\
{}\leq {} &\dfrac{|q_0|_\infty }{|\Omega|} \int_0^t  \int_\Omega 
\int_\Omega \left| \nabla^\perp_{\xb}
        G^{\xb,z}(\Phi^{n-1}_{\eta,\tau}(\bb),\,\eta)
  - \nabla^\perp_{\xb} G^{\xb,z}(\Phi^{n-2}_{\eta,\tau}(\bb),\,\eta)\right|
         d\xb dz  d\bb d\eta d\tau.
\end{align*}
It can be shown, in exactly the same way, that a bound similar to
\eqref{eq:19} applies to the innermost integral. We then
utilize the Jensen's inequality again to derive that 
\begin{multline}
  \label{eq:34}
\dfrac{1}{|\Omega|} \int_0^t  \int_\Omega 
\left|\ub^n(\xb,\,z,\,\tau) -
                 \ub^{n-1}(\xb , z, \tau) \right| d\xb dz  d\tau 
               \leq\\
C {|q_0|_\infty } \int_0^t \lambda\left( \dfrac{1}{|\Omega|} \int_\Omega 
\left| \Phi^{n-1}_{\eta,\tau}(\bb)  - \Phi^{n-2}_{\eta,\tau}(\bb)\right|
 d\bb d\eta\right) d\tau.  
\end{multline}

Combining \eqref{eq:33} and \eqref{eq:34} in \eqref{eq:32}, and after
denoting
\begin{equation*}
  \delta^n(t) = \dfrac{1}{|\Omega|} \int_\Omega 
\left| \Phi^{n}_{\eta,\tau}(\ab)  - \Phi^{n-1}_{\eta,\tau}(\ab)\right|
 d\ab d\eta, 
\end{equation*}
we have
\begin{equation}
  \label{eq:35}
  \delta^n(t) \leq C \int_0^t \lambda(\delta^n(\tau)) d\tau +
  C |q_0|_\infty \int_0^t \lambda(\delta^{n-1}(\tau)) d\tau,\qquad
  n\ge 1.
\end{equation}
Following Marchioro and Pulvirenti (\cite{Marchioro1994-yt}), we
define 
\begin{equation*}
  \rho^N(t) = \sup_{n\ge N} \delta^n(t). 
\end{equation*}
For a fixed $N$, one considers \eqref{eq:35} for $n\ge N$, and derives that
\begin{equation}
 \label{eq:36}
  \rho^N(t) \leq C(1+|q_0|_\infty) \int_0^t \lambda(\rho^{N-1}(\tau)) d\tau.
\end{equation}
This inequality is in exactly the same form as \eqref{eq:37}, and by
the same technique, one can show that, for a $T$ depending on $\Omega$
and $|q_0|_\infty$ only, 
$\rho^N(t)$ is an exponentially decaying geometric sequence, and
$\Phi^n_{z,t}(\ab)$ is a uniformly Cauchy sequence under the $L^1$-norm over
$[0,\,T]$. Therefore, there exists a unique function $\Phi_{z,t}$ such
that
\begin{equation}\label{eq:38}
  \Phi^n_{z,t} \longrightarrow \Phi_{z,t} \qquad \textrm{in } C([0,T];
  L^1(\Omega)). 
\end{equation}


Next, we show the convergence of $q^n$. We define
\begin{equation*}
  q(\xb,\,z,\,t) = q_0(\Phi_{z,-t}(\xb),\,z). 
\end{equation*}
We now show that $q^n$ converges to $q$ in a certain sense. We let
$f\in\mathcal{C}^1(\bar\M)$. Then, we find that
\begin{align*}
  &\left| \int_0^1\!\int_\M f(\xb)\left( q^n(\xb,\,z,\,t) - q(\xb,\,z,\,t) 
  \right) d\xb dz\right|\\
 {}={} &   \left| \int_0^1\!\int_\M f(\xb)\left( q_0(\Phi^{n-1}_{z,-t}(\xb),\,z) - q_0(\Phi_{z,-t}(\xb),\,z) 
   \right) d\xb dz\right|\\
{}={} &   \left| \int_0^1\!\int_\M \left( f(\Phi^{n-1}_{z,t}(\ab)) -
        f(\Phi_{z,t}(\ab)) \right) q_0(\ab,\,z) d\ab dz\right|\\
{}\leq{} &  |q_0|_\infty \int_0^1\!\int_\M \left| f(\Phi^{n-1}_{z,t}(\ab)) -
           f(\Phi_{z,t}(\ab)) \right| d\ab dz\\
{}\leq{} &  |q_0|_\infty |\nabla f|_\infty \int_0^1\!\int_\M \left| \Phi^{n-1}_{z,t}(\ab) -
           \Phi_{z,t}(\ab) \right| d\ab dz. 
\end{align*}
Thanks to the convergence \eqref{eq:38} of $\Phi^n_{z,t}$, we have
that
\begin{equation*}
  \left| \int_0^1\!\int_\M f(\xb)\left( q^n(\xb,\,z,\,t) - q(\xb,\,z,\,t) 
  \right) d\xb dz\right| \longrightarrow 0,\qquad\textrm{as }
n\rightarrow \infty,\,\textcolor{red}{\textrm{uniformly over }} [0,\,T]. 
\end{equation*}
By a continuity argument, one can show that the above is also true for
general $f\in\mathcal{C}(\bar\M)$. Hence one concludes that $q^n$
converges to $q$ weakly in the sense that, for every
$f\in\mathcal{C}(\bar\M)$,
\begin{equation*}
  \int_0^1\!\int_\M f(\xb)\left( q^n(\xb,\,z,\,t) - q(\xb,\,z,\,t) 
  \right) d\xb dz\longrightarrow 0,\qquad\textrm{as }
n\rightarrow \infty,\,\textcolor{red}{\textrm{uniformly over }} [0,\,T]. 
\end{equation*}

Finally, we show the convergence of $\ub^n$. We let
\begin{equation*}
  \ub(\xb,\,z,\,t) = \int_0^1 \int_\M \nabla^\perp_x
  G^{\xb,z}(\xib,\,\eta) q(\xib,\,\eta,\,t) d\xib d\eta. 
\end{equation*}
Taking the difference of $\ub^n$ and $\ub$, and integrate the absolute
value of the difference over $\Omega$, we have
\begin{multline*}
  \int_0^1 \int_\M | \ub^n(\xb,\,z,\,t) - \ub(\xb,\,z,\,t) | d\xb dz \leq
  {}\\
\int_0^1 \int_\M \left|\int_0^t \int_\M \left(\nabla^\perp_x
    G^{\xb,z}(\xib,\,\eta) q^n(\xib,\,\eta,\,t)- \nabla^\perp_x
    G^{\xb,z}(\xib,\,\eta) q(\xib,\,\eta,\,t)\right) d\xib d\eta \right|
d\xb dz. 
\end{multline*}
Again, using the specifications for $q^n$ and $q$ with the
corresponding flow maps, we have
\begin{align*}
  &\int_0^1 \int_\M | \ub^n(\xb,\,z,\,t) - \ub(\xb,\,z,\,t) | d\xb dz\\
{}\leq{} &|q_0|_\infty \int_0^1 \int_\M \int_0^1\!\int_\M \left|\nabla^\perp_x
    G^{\xb,z}(\Phi^{n-1}_{\eta,t}(\ab),\,\eta)- \nabla^\perp_x
    G^{\xb,z}(\Phi_{\eta,t}(\ab),\,\eta)\right| d\xb dz d\ab d\eta\\
{}\leq{} &C |q_0|_\infty \int_0^1 \int_\M\varphi\left( \left|
           \Phi^{n-1}_{\eta,t}(\ab)- \Phi_{\eta,t}(\ab)\right|\right)  d\ab d\eta\\
{}\leq{} & -C |q_0|_\infty \log\epsilon \int_0^1 \int_\M \left|
           \Phi^{n-1}_{\eta,t}(\ab)- \Phi_{\eta,t}(\ab)\right|  d\ab
           d\eta + C |q_0|_\infty \epsilon |\Omega|. 
\end{align*}
The above holds for arbitrary $\epsilon\in(0,\,1)$. Given that
$\Phi^n_{z,t}$ converges to $\Phi_{z,t}$ uniformly in $t$ in the $L^1$
norm, the same holds for $\ub^n$ and $\ub$ as well. 
\end{proof}

\begin{proof}[Proof of Theorem \ref{t:1}]
Let $\{\Phi^n_{z,t},\,\ub^n,\, q^n\}_{n\ge 0}$ be the sequence of
tripplets generated by an iteration of the iterative procedure. By
Lemma \ref{l:1}, the sequence converges to a tripplet
$\{\Phi_{z,t},\,\ub,\,q\}$ on an interval $[0,\,T_1]$, where $T_1$
depends on $\Omega$ and $|q_0|_\infty$ only. We now verify that 
the limit $(\Phi_{z,t},\, \ub,\, q)$ indeed
solves the QG system, one starts with the equation for $\Phi^n_{z,t}$
and $\ub^n$,
\begin{equation*}
  \Phi^n_{z,t}(\ab) = \ab + \int_0^t \ub^n (\Phi^n_{z,t}(\ab),\,z,\,t) d\tau.
\end{equation*}
We take the limit on both sides. For an arbitrary $t$, the left-hand
side converges to $\Phi_{z,t}(\ab)$ in the $L^1$-norm, and the
right-hand side converges to $\ab + \int_0^t
\ub(\Phi_{z,\tau}(\ab),\,z,\,\tau) d\tau$. Thus, in the limit, one
has, for an arbitrary $t$,
\begin{equation*}
  \Phi_{z,t}(\ab) = \ab + \int_0^t \ub(\Phi_{z,t}(\ab),\,z,\,t)
  d\tau,\qquad \textrm{in } L^1(\Omega). 
\end{equation*}
Thanks to the quasi-Lipschitz continuity of $\ub$, $\Phi_{z,t}(\ab)$
is continuous in $\Omega$, and
the above
holds for all $(\ab,\,z)\in\Omega$. Thus, 
the limit functions $(\Phi_{z,t},\,\ub,\, q)$ satisfy the
equations \eqref{eq:13}, \eqref{eq:2}, \eqref{eq:21}, and
\eqref{eq:26} over $[0,\,T]$. They are a weak solution to the QG
system. 

To show that $(\Phi_{z,t},\,\ub,\,q)$ is the only
solution. We assume that $(\tilde\Phi_{z,t},\,\tilde\ub,\, \tilde q)$
is another set of solutions, satisfying \eqref{eq:21} and
\eqref{eq:26} for the same initial data $q_0$. Taking the difference
of the equation \eqref{eq:21} for $\Phi_{z,t}$ and $\tilde
\Phi_{z,t}$,  we have
\begin{equation*}
  \Phi_{z,t}(\ab) - \tilde\Phi_{z,t}(\ab) = \int_0^t
  \left(\ub(\Phi_{z,\tau}(\ab),\,z,\,\tau) -
    \tilde\ub(\tilde\Phi_{z,\tau}(\ab),\, z,\, \tau)\right) d\tau. 
\end{equation*}
As before, we split the right-hand side into two parts, take the
absolute value on both sides, and then take the volume average to
obtain
\begin{multline}
  \label{eq:39}
  \dfrac{1}{|\Omega|} \int_\Omega \left| \Phi_{z,t}(\ab) -
    \tilde\Phi_{z,t}(\ab)\right| d\ab dz \leq {}\\
\dfrac{1}{|\Omega|}\int_0^t \int_\Omega \left|
  \ub(\Phi_{z,\tau}(\ab),\, z,\, \tau) -
  \ub(\tilde\Phi_{z,\tau}(\ab),\,z,\,\tau) \right| d\ab dz d\tau 
+ {}\\
\dfrac{1}{|\Omega|}\int_0^t \int_\Omega \left|
  \ub(\tilde\Phi_{z,\tau}(\ab),\, z,\, \tau) -
  \tilde \ub(\tilde\Phi_{z,\tau}(\ab),\,z,\,\tau) \right| d\ab dz d\tau.
\end{multline}
Using the quasi-Lipschitz continuity of $\ub$ on the first term on the
right-hand side, we have
\begin{multline}
  \label{eq:40}
\dfrac{1}{|\Omega|}\int_0^t \int_\Omega \left|
  \ub(\Phi_{z,\tau}(\ab),\, z,\, \tau) -
  \ub(\tilde\Phi_{z,\tau}(\ab),\,z,\,\tau) \right| d\ab dz d\tau \leq
{}\\
\int_0^t \varphi\left( \dfrac{1}{|\Omega|} \int_\Omega \left|
  \Phi_{z,\tau}(\ab) -
  \tilde\Phi_{z,\tau}(\ab) \right| d\ab dz \right) d\tau.
\end{multline}
For the second term, we express the velocity fields using the Green's
function $G$ on the corresponding potential vorticity fields, and
after a series of changes of variables and an application of the
quasi-Lipschitz continuity of the gradient of $G$, we obtain
\begin{multline}
  \label{eq:41}
\dfrac{1}{|\Omega|}\int_0^t \int_\Omega \left|
  \ub(\tilde\Phi_{z,\tau}(\ab),\, z,\, \tau) -
  \tilde \ub(\tilde\Phi_{z,\tau}(\ab),\,z,\,\tau) \right| d\ab dz
d\tau \leq {} \\
C |q_0|_\infty \int_0^t \varphi\left( \dfrac{1}{|\Omega|} \int_\Omega \left|
  \Phi_{z,\tau}(\ab) - \tilde \Phi_{z,\tau}(\ab)\right| d\ab dz\right) d\tau.
\end{multline}
Combining \eqref{eq:40} and \eqref{eq:41} in \eqref{eq:39}, and after
setting
\begin{equation*}
  \delta(t) = \dfrac{1}{|\Omega|} \int_\Omega \left|
  \Phi_{z,\tau}(\ab) - \tilde \Phi_{z,\tau}(\ab)\right| d\ab dz,
\end{equation*}
we arrive at the inequality
\begin{equation*}
  \delta(t) \leq C(1+|q_0|_\infty) \int_0^t \lambda(\delta(\tau))
  d\tau. 
\end{equation*}
Using the inequality \eqref{eq:28}, with $0<\epsilon < 1$ arbitrary,
for the function $\varphi$ in the above, we derive that
\begin{equation*}
  \delta(t) \leq C(1+|q_0|_\infty) \left( -\log\epsilon \int_0^t
    \delta(\tau)  d\tau + \epsilon t\right). 
\end{equation*}
By Gronwall's inequality, we have
\begin{align*}
  \delta(t) \leq {}& C(1+|q_0|_\infty ) \epsilon t
  e^{-C(1+|q_0|_\infty) \log \epsilon t}\\
  {}={} & C(1+|q_0|_\infty ) t \epsilon^{1-C(1+|q_0|_\infty) t}.
\end{align*}
On an interval $[0,\,T_2]$ where $C(1+|q_0|_\infty) T_2\leq \frac{1}{2}$,
we have
\begin{equation*}
  \delta(t) \leq C(1+|q_0|_\infty ) T_2 \epsilon^{1-C(1+|q_0|_\infty) T_2}
  \leq \dfrac{1}{2} \epsilon^{\frac{1}{2}}. 
\end{equation*}
Given the arbitrariness of $0<\epsilon <1$, the above can only be true
when
\begin{equation*}
  \delta(t) \equiv 0,\qquad \forall\,t\in[0,\,T_2]. 
\end{equation*}
Thus $\Phi_{z,t}$ must be unique, and as a
consequence, $q(\xb,\,z,\,t)$ and $\ub(\xb,\,z,\,t)$ must also be
unique. The tripplet $\{\Phi_{z,t},\,\ub,\,q\}$ is a unique solution
to the generalized QG system on the interval $[0,\,T]$, where $T =
\min\{T_1,\,T_2\}$ depends on $\Omega$ and $|q_0|_\infty$ only. 

By repeating the argument above we extend the conclusion
for all time. 
The proof of Theorem \ref{t:1} is complete.
  
\end{proof}

\section{A classical solution}\label{s:classical}

\begin{theorem}\label{t:cls}
 Suppose the initial state $q_0\in
 \mathcal{C}^1(\bar\Omega)$. Then the solution
 $q\in\mathcal{C}^1(\bar\Omega\times\mathbb{R})$, and the QG
 system holds in the classical sense. 
\end{theorem}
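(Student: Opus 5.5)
Since $q_0\in\mathcal{C}^1(\bar\Omega)\subset L^\infty(\Omega)$, Theorem \ref{t:1} already gives a unique generalized solution $(q,\ub,\Phi)$ with $q(\xb,z,t)=q_0(\Phi_{z,-t}(\xb),z)$. We show that this solution inherits the $\mathcal{C}^1$ regularity of $q_0$, i.e.\ that $\Phi_{z,t}$ and its inverse are $\mathcal{C}^1$ in $(\xb,z,t)$. The classical identity computed after \eqref{eq:26} then holds pointwise.

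\emph{Step 1: a priori bound on $\nabla q$.} With only the quasi-Lipschitz bound \eqref{eq:214j}, the flow map is Hölder continuous with exponent $e^{-Ct}$, which is too weak. We therefore follow Kato's argument. Suppose $q$ is Hölder continuous, $q(\cdot,t)\in C^{0,\alpha}(\bar\Omega)$. Schauder-type estimates for the BVP \eqref{eq:2}, obtained from the Green's function of Appendix \ref{sec:green}, should give $\psi\in C^{2,\alpha}$ with
\begin{equation*}
  |\nabla_3\nabla\psi|_\infty\le C\left(1+|q|_\infty\left(1+\log^+\frac{|q|_{C^{0,\alpha}}}{|q|_\infty}\right)\right).
\end{equation*}
This is the usual logarithmic (Beale--Kato--Majda type) estimate. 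Its proof is the same singular-integral estimate on $\nabla\nabla G$ that underlies Theorem \ref{lem:elliptic-reg}, with the kernel split at radius $r$ and $r$ optimized. Differentiating \eqref{eq:20} in $\ab$ gives
\begin{equation*}
  \p_t D\Phi_{z,t}=D\ub(\Phi_{z,t},z,t)\,D\Phi_{z,t}.
\end{equation*}
Differentiating \eqref{eq:20} in $z$ gives
\begin{equation*}
  \p_t\p_z\Phi_{z,t}=D\ub\,\p_z\Phi+\p_z\ub.
\end{equation*}
From these, $|\nabla_3 q|_\infty\le|\nabla_3 q_0|_\infty\exp\left(\int_0^t|\nabla_3\ub|_\infty\right)$, up to an additive term coming from $\p_z\ub$. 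Combining this with the logarithmic bound yields a Gronwall inequality of the form $y'\le Cy\log y$ for $y=e+|\nabla_3 q|_\infty$. This gives double-exponential growth, with no blow-up at any finite time.

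\emph{Step 2: construction of the solution.} We rerun the iterative procedure of Section \ref{s:exist} (or a short-time contraction in $\mathcal{C}^1$) with $q_0\in\mathcal{C}^1$. Each iterate $\ub^n$ is $C^{1,\alpha}$ in space, so $\Phi^n$ is $\mathcal{C}^1$ by classical ODE theory and $q^{n+1}\in\mathcal{C}^1$. The bounds of Step 1 hold uniformly in $n$ on any finite interval. Arzelà--Ascoli then gives a $\mathcal{C}^1$ limit (at least Lipschitz, with continuity of derivatives from equicontinuity), and by the uniqueness in Theorem \ref{t:1} this limit coincides with the generalized solution. Time differentiability follows from $\p_t\Phi=\ub(\Phi)$ with continuous $\ub$, together with the chain rule on $q_0\circ\Phi_{z,-t}$. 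This gives $q\in\mathcal{C}^1(\bar\Omega\times\mathbb{R})$, and the equation $\p_tq+\ub\cdot\nabla q=0$ holds classically.

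\emph{Main obstacle.} The hard step is the elliptic Schauder/logarithmic estimate up to the boundary, in particular near the edges $\p\M\times\{0,1\}$ and under the nonlocal boundary conditions in \eqref{eq:2}. I would use the even reflection across $z=0,1$ and the periodic extension in $z$ employed in Appendix \ref{sec:green}. This reduces the problem to a lateral boundary without edges, where standard $C^{2,\alpha}$ theory for Dirichlet-type data applies; the constant-on-loops, zero-circulation conditions are handled as in the 2D case through a finite-dimensional correction by harmonic-measure functions. A second difficulty is the mixed $z$-regularity: $\p_z q$ requires control of $\p_z\ub$, which the $C^{1,\alpha}$ bound on $\psi$ in all three variables supplies.
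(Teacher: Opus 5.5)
Your proposal is essentially correct, but it takes a different route from the paper, and the remark that motivates your detour is mistaken. You dismiss the H\"older continuity of the flow map with exponent $e^{-Ct}$ as too weak. That is precisely what the paper uses: $\mathcal{C}^1$ regularity is a statement about each finite interval $[0,T]$, and there the exponent stays above $\alpha e^{-CT}>0$. The paper's chain runs as follows.
\begin{enumerate}
\item The quasi-Lipschitz bound \eqref{eq:214j} depends only on the conserved quantity $|q|_\infty=|q_0|_\infty$. Combined with Proposition A.1 of Marchioro--Pulvirenti, it makes $(\ab,z)\mapsto(\Phi_{z,t}(\ab),z)$ and its inverse uniformly H\"older on $[0,T]$.
\item Hence $q=q_0(\Phi_{z,t}^{-1}(\xb),z)$ is H\"older with exponent $\alpha\alpha'(T)$.
\item A standard Schauder estimate gives $\psi\in\mathcal{C}^2$, so $\ub\in\mathcal{C}^1$.
\item Classical ODE theory makes $\Phi_{z,t}$ differentiable in $\ab$ and in the parameter $z$.
\item The chain rule gives $q\in\mathcal{C}^1$ and the classical equation.
\end{enumerate}
This argument works directly on the generalized solution of Theorem \ref{t:1}. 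There is no a priori bound on $\nabla_3 q$ to close, no rerun of the iteration, no compactness step, and only the plain Schauder estimate is needed, not its logarithmic refinement.

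Your Kato/Beale--Kato--Majda route is also valid, but it costs more. You need the logarithmic refinement of the Schauder estimate up to the edges $\p\M\times\{0,1\}$ and under the nonlocal lateral conditions. You correctly flag this as the main obstacle, and the paper's route avoids it entirely. In Step 2, continuity of the derivatives of the limit should not rest on an unproved equicontinuity claim. It is cleaner to apply Schauder once to the Lipschitz limit $q$: this gives $\ub\in C^{1,\alpha}$, hence $\Phi_{z,t}\in\mathcal{C}^1$, hence $q=q_0\circ\Phi_{z,t}^{-1}\in\mathcal{C}^1$. In return, your route gives an explicit double-exponential bound on $|\nabla_3 q|_\infty$. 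It also gives Lipschitz control of $q$ without passing through a H\"older exponent that degenerates as $t\to\infty$. The paper's argument yields $\mathcal{C}^1$ on every finite interval, but its constants come from Schauder estimates for the shrinking exponent $\alpha e^{-Ct}$.
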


\begin{proof}
From the assumption it is clear that the initial state $q_0$ is
uniformly H\"older continuous over $\Omega$, i.e.~there exist
constants $C>0$ and $0<\alpha\leq 1$ such that
\begin{equation*}
  \left| q_0(\xb_1,z_1) - q_0(\xb_2,z_2)\right| \leq C\left( |\xb_1 - \xb_2|
      + |z_1-z_2| \right)^\alpha,
    \qquad\forall\,(\xb_1,z_1),\,(\xb_2,z_2)\in\bar\Omega. 
  \end{equation*}

With this H\"older continuity on $q_0$, we now show that, for all time
$t\in[0,\,T]$, the map $(\ab,z)\longrightarrow (\Phi_{z,t}(\ab), z)$ is
uniformly H\"older continuous. We 
let $(\ab_1,z_1),\,(\ab_2,z_2)\in\bar\Omega$ be arbitrary. Using the
relation \eqref{eq:21}, and  the
quasi-Lipschitz continuity of the velocity field $\ub$, we derive that
\begin{align*}
  &\left| \Phi_{z_1,t}(\ab_1) - \Phi_{z_2,t}(\ab_2)\right| + |z_1 -
    z_2| \\ {}\leq
  &|\ab_1 - \ab_2| + |z_1 - z_2| +  \int_0^t \left| \ub(\Phi_{z_1,\tau}(\ab_1),z_1,\tau) -
    u(\Phi_{z_2,\tau}(\ab_2),z_2,\tau)\right| d\tau\\ {}\leq 
  &|\ab_1 - \ab_2| +|z_1 - z_2| +  C \int_0^t\lambda\left( \left|\Phi_{z_1,\tau}(\ab_1) -
    \Phi_{z_2,\tau}(\ab_2)\right| + |z_1 - z_2|\right)  d\tau,
\end{align*}
where $\lambda$ is a monotonic scalar function defined in Section
\ref{s:elliptic}. 
It can be shown (see  Proposition A.1, Appendix
2.1, of \cite{Marchioro1994-yt})  that
\begin{equation*}
\left| \Phi_{z_1,t}(\ab_1) - \Phi_{z_2,t}(\ab_2)\right| + |z_1 -
    z_2| \leq\left\{
    \begin{aligned}
&\left(|\ab_1 - \ab_2| + |z_1 -
  z_2|\right)^{e^{-Ct}} \cdot e^{1-e^{-Ct}}, & & t\leq t_0,\\
&1 + C(t-t_0), & & t> t_0,
    \end{aligned}\right.
\end{equation*}
where $t_0$ is the time when the first expression on the right reaches
value $1$, and the constant $C$ is inherited directly from the estimate
\eqref{eq:19}. It is easy to verify that $t_0$ increases as the
initial distance $|\ab_1 - \ab_2|+|z_1 - z_2|$ decreases. 
Thus, there exists $0<\alpha'\equiv \alpha'(T)\leq 1$, and $C\equiv
C(T,\Omega)>0$, such that  
  \begin{equation*}
\left| \Phi_{z_1,t}(\ab_1) - \Phi_{z_2,t}(\ab_2)\right| + |z_1 -
    z_2| \leq C\left( |\ab_1 - \ab_2| + |z_1 -
      z_2|\right)^{\alpha'}.   
  \end{equation*}
By reversing the direction of the flow, it can be shown that the same
estimate holds for the inverse map $\Phi_{z,t}^{-1}$, i.e., for
$(\xb_1,z_1),\,(\xb_2,z_2)\in\Omega$, $t\in[0,\,T]$ arbitrary,
  \begin{equation*}
\left| \Phi^{-1}_{z_1,t}(\xb_1) - \Phi^{-1}_{z_2,t}(\xb_2)\right| + |z_1 -
    z_2| \leq C\left( |\xb_1 - \xb_2| + |z_1 -
      z_2|\right)^{\alpha'}.   
  \end{equation*}

With the H\"older continuity of the flow map, we now show that the PV
$q$ is also H\"older continuous for all time $t\in[0,\,T]$. Again, we let
$(\xb_1,z_1),\,(\xb_2,z_2)\in\bar\Omega$ and $t\in[0,\,T]$  be arbitrary.
\begin{align*}
  &\left| q(\xb_1,z_1, t) - q(\xb_2,z_2, t)\right| \\
{}=& \left| q_0\left(\Phi_{z_1,t}^{-1}(\xb_1),z_1\right)-
     q_0\left(\Phi_{z_2,t}^{-1}(\xb_2),z_2\right) \right| \\
{}\leq& C \left( \left|
  \Phi_{z_1,t}^{-1}(\xb_1)-\Phi_{z_2,t}^{-1}(\xb_2)\right| + |z_1 -
  z_2|\right)^\alpha\\
{}\leq& C \left( \left|
  \xb_1 - \xb_2\right| + |z_1 -
  z_2|\right)^{\alpha\alpha'}.
\end{align*}

Thus, if $q_0$ is H\"older continuous over $\bar\Omega$, then
$q(\cdot,\cdot,t)$ is also H\"older continuous for all time $t\in[0,\,T]$. 
Given the H\"older continuity of $q$, a classical result in elliptic
PDE theories (\cite{Courant1989-gn}) states that the streamfunction,
as the solution to the 
elliptic BVP \eqref{eq:2}, is $\mathcal{C}^2$, and the velocity field
$\ub$, as the gradient of the streamfunction, is then $\mathcal{C}^1$
continuous. It then follows that $\Phi_{z,t}$, as a solution to the
IVP \eqref{eq:20}, is also differentiable with  respect to the initial
data as well as to the parameter $z$.

Finally, given that $q_0 \in C^1(\bar\Omega)$, the PV solution
$q(\xb,z,t)$ is differential in each of its variables. Indeed, its
time derivative can be calculated as follows,
\begin{align*}
\dfrac{\p}{\p t} q(\xb,z,t) &= \dfrac{\p}{\p t}
  q_0\left(\Phi_{z,t}^{-1}(\xb),\,z\right) = \dfrac{\p}{\p \ab}
  q_0\left(\Phi_{z,t}^{-1}(\xb),z\right) \cdot \dfrac{\p
                              \Phi_{z,t}^{-1}(\xb) }{\p t}  \\
  {}&= -\dfrac{\p}{\p \ab}
  q_0\left(\Phi_{z,t}^{-1}(\xb),z\right) \cdot \left(\dfrac{\p \Phi_{z,t}(\ab) }{\p
      \ab }\right)^{-1}\ub(\xb,t). 
\end{align*}
Similarly,
\begin{align*}
  &\dfrac{\p}{\p\xb} q(\xb,z,t) = \dfrac{\p}{\p\ab}
  q_0\left(\Phi_{z,t}^{-1}(\xb),z\right) \left(\dfrac{\p \Phi_{z,t}(\ab) }{\p
      \ab }\right)^{-1} ,\\
 & \dfrac{\p}{\p z} q(\xb,z,t) =
  \dfrac{\p}{\p\ab}q_0\left(\Phi_{z,t}^{-1}(\xb),z\right) \cdot \dfrac{\p
  \Phi_{z,t}^{-1}(\xb)}{z} + \dfrac{\p}{\p z} q_0\left(\Phi_{z,t}^{-1}(\xb),z\right).
\end{align*}
The above calculations are legitimate, because each term has been
shown to be well defined. Thus, the PV solution $q(\xb,z,t)$ is
differentiable against each of its variables. It is also apparent that
\begin{equation*}
  \dfrac{\p }{\p t} q + \ub\cdot\nabla q = 0,
\end{equation*}
that is, the QG equation holds in the classical sense. This completes
the proof that $q$ is a classical solution to the system. 
\end{proof}

\section{Remarks}
In this work, the 3D quasi-geostrophic equation over a cylindrical domain
with multiply connected cross-section is studied. The model is
equipped with no-flux boundary conditions on the lateral, homogeneous
Neumann on the top and bottom boundary interfaces, constant
circulation along each interior boundary loop. The QG flow behaves more
like a two-dimensional flow despite being in a three-dimensional
domain. 
With the setup proposed here, the classical strategies of Yudovich and
Kato (\cite{Yudovich1963-bj,Kato1967-pe}; see also
\cite{Marchioro1994-yt}) can be applied to the present case without
much 
difficulty, and the global existence and uniqueness of a weak solution
is proven, given that the initial PV field is essentially bounded. If
the initial PV field is more regular, in $C^1$ for example, then the
solution is shown to satisfy the PDE in the classical sense. 

A natural question to ask is whether the same classical technique can
be applied to the case where a transport equation is posed on the top
and/or bottom boundary interface, in place of the homogeneous Neumann
boundary conditions, as in \cite{Desjardins1998-hb,Novack2020-gi,
  Novack2020-vv}. This 
change will make the model more broadly applicable in geophysics. 
However, this change will also immediately bring about a few technical
difficulties. On the one hand, with non-homogeneous
conditions/constraints on the boundaries, various compatibility
conditions will have to be imposed to ensure the viability and
regularity of any possible solutions; see a brief discussion on this
matter at the end of section 2, and discussions in
\cite{Novack2020-gi, Novack2020-vv}.  
On the other hand, with the transport equation replacing the homogeneous
Neumann boundary interfaces, the model becomes much more complex, with
the infamous SQG-like dynamics on the top and/or bottom boundary
interface (\cite{Constantin1994-me, Constantin1994-gm}), and QG-like
dynamics in the interior. These fascinating 
issues will be investigated elsewhere.

\appendix
\section{Estimates on the Green's function to the non-standard
  elliptic BVP}\label{sec:green}

It is well known that the Green's function to the elliptic boundary
value problem over a domain with smooth boundaries enjoy the
power-law estimates (\cite{Superiore1963-wl, Gr-uter1982-fy}). 
The cylindrical domain for our non-standard elliptic boundary value
problem \eqref{eq:2}  possesses an intrinsic geometric
non-smoothness, that is, the sharp edges between the top/bottom surface
and the lateral side. However, with Neumann boundary conditions on the
top and bottom surface, this singularity on the boundary can be dealt
with by an expansion of the domain in the vertical direction onto
$[ -1,\,1]$, with periodic boundary conditions along this interval
(\cite{Courant1953-gq}).  
We denote this expanded domain by $\tilde\Omega =
\M\times (-1,\,1)$, 
and, for each $(\xb,\,z)\in\tilde\Omega$, we let $\Phi^{\xb,z}(\xib,\eta)$
be the fundamental solution of the Laplace operator 
\begin{equation*}
  \Delta_3 = \Delta + \dfrac{\p}{\p z}\left(\dfrac{\p}{\p
      z}(\cdot)\right)
\end{equation*}
centered at $(\xb,\,z)$. To construct the Green's function, we
 let $\phi^{\xb,z}(\xib,\eta)$ be the corresponding corrector function, satisfying
\begin{equation}
  \label{eq:10}
  \left\{
    \begin{aligned}
      \Delta_3 \phi^{\xb,z}(\xib,\,\eta) &= \frac{1}{|\tilde\Omega|},\\
      & &\\
    \left.\left(\phi^{\xb,z}(\xib,\eta) - \Phi^{\xb,z}(\xib,\eta)\right)\right|_{\Gamma_l}
    &= \textrm{indep.~of }\xib, & & 0\leq l\leq L,\\
      \varphi^{\xb,z}(\xib,\,-1)- \Phi^{\xb,z}(\xib,\,-1) &=
     \varphi^{\xb,z}(\xib,\,1) - \Phi^{\xb,z}(\xib,\,1), & &\xib\in\M, \\ 
      \dfrac{\p \varphi^{\xb,z}}{\p \eta}(\xib,\,-1) -  \dfrac{\p
        \Phi^{\xb,z}}{\p \eta}(\xib,\,-1) &=
     \dfrac{\p\varphi^{\xb,z}}{\p \eta}(\xib,\,1) -
     \dfrac{\p\Phi^{\xb,z}}{\p \eta}(\xib,\,1), & &\xib\in\M,\\
    \int_{\Gamma_l} \dfrac{\p\phi^{\xb,z}}{\p\nb}(\xib,\eta) ds &=
    \int_{\Gamma_l} \dfrac{\p\Phi^{\xb,z}}{\p\nb}(\xib,\, \eta) ds, & &
    0\leq l\leq L,\\
    \int_0^1\!\int_{\M} \phi^{\xb,z}(\xib,\eta) d\xib d\eta&=
    \int_0^1\!\int_\M \Phi^{\xb,z}(\xib,\, \eta) d\xib d\eta. & &
    \end{aligned}\right.
\end{equation}

We let
\begin{equation}
  \label{eq:16}
  G_{\mathrm{per}}^{\xb,z}(\xib, \eta)= \Phi^{\xb,z} (\xib, \eta) - \phi^{\xb,z}(\xib,\eta).
\end{equation}
Then, formally, $G_{\mathrm{per}}^{\xb,z}(\xib, \eta)$ satisfies
\begin{equation}
  \label{eq:24}
  \left\{
    \begin{aligned}
      \Delta G_{\mathrm{per}}^{\xb,z}+ \dfrac{\p^2}{\p
        z^2}G_{\mathrm{per}}^{\xb,z} &= \delta^{\xb,z}(\xib,\eta) -
      \frac{1}{|\tilde\Omega|}, &  
      &(\xib,\,\eta)\in\M\times(-1,\,1),\\
      G_{\mathrm{per}}^{\xb,z}(\xib,\eta)|_{\Gamma_l} &= \textrm{indep.~of }\xib & &\xib\in\p\M,\,\,\eta\in(0,\,1),\\
      G_{\mathrm{per}}^{\xb,z}(\xib,\,-1) &=
      G_{\mathrm{per}}^{\xb,z}(\xib,\,1), & &\xib\in\M, \\ 
      \dfrac{\p G_{\mathrm{per}}^{\xb,z}}{\p \eta}(\xib,\,-1) &= \dfrac{\p
        G_{\mathrm{per}}^{\xb,z}}{\p \eta}(\xib,\,1), & &\xib\in\M,\\
  \int_{\Gamma_l} \dfrac{\p G_{\mathrm{per}}^{\xb,z}}{\p{\bs n}} ds &= 0, &  &0\leq l\leq L, \\
  \int_0^1\!\int_\M G_{\mathrm{per}}^{\xb,z}  d\xib d\eta&= 0, & &\forall\,\eta \in (0,\,1). 
    \end{aligned}\right.
\end{equation}

From $\Gper$, the Green's function $G^{\xb,z}$ for the non-standard
boundary value problem \eqref{eq:2} can be constructed.  For each
$(\xb,z)\in \Omega \equiv \M\times(0,\,1)$,  we let
\begin{equation}
  \label{eq:25}
  G^{\xb,z}(\xib,\eta) = \Gper(\xib,\eta) + G^{\xb,z}_{\textrm{per}}(\xib,-\eta), \qquad
  \forall\,\,(\xib,\eta)\in\M\times(0,\,1). 
\end{equation}
Formally, $G^{\xb,z}(\xib,\eta)$ satisfies 
\begin{equation}
  \label{eq:23}
  \left\{
    \begin{aligned}
      \Delta G^{\xb,z} + \dfrac{\p^2}{\p z^2}G^{\xb,z} &=
      \delta^{\xb,z}(\xib,\eta) - \frac{1}{|\Omega|}, &
      &(\xib,\,\eta)\in\M\times(0,1),\\
      G^{\xb,z}(\xib,\eta)|_{\Gamma_l} &= \textrm{indep.~of }\xib & &\xib\in\p\M,\,\,\eta\in(0,\,1),\\
   \int_{\Gamma_l}
   \dfrac{\p G^{\xb,z}}{\p{\bs n}} ds &= 0, &  &1\leq l\leq L, \\
      \dfrac{\p G^{\xb,z}}{\p \eta} &= 0, & &\xib\in\M,\,\,\eta=0,\,1,\\
  \int_\M G^{\xb,z}  d\xb &= 0. & &\forall\,\eta \in (0,\,1). 
    \end{aligned}\right.
\end{equation}
Here, $\delta^{\xb,z}(\xib,\eta)$ of course is the Dirac Delta function
centered at $(\xb,\,z)$.
We note that $G^{\xb,z}(\xib,\eta)$ obviously satisfies the PDE (1st
equation of \eqref{eq:23}), the  lateral boundary
conditions (2nd and 3rd equations), and the constraint (5th
equation). With a little bit of calculus, it can be shown that it 
also satisfies the homogeneous Neumann boundary conditions on the top
and bottom boundaries (4th equation).

The Green's function $G^{\xb,z}(\xib,\eta)$ to the non-standard elliptic
boundary value 
problem \eqref{eq:2} enjoys the following power function estimates as
$\Gper(\xib,\eta)$ does.

\begin{lemma}\label{lem1}
  The Greens's function for the three-dimensional elliptic boundary
  value problem \eqref{eq:2} over
  the cylinder $\Omega$  satisfies the 
  following estimates,
  \begin{subequations}\label{green}
  \begin{align}
|G^{\xb,z}(\xib,\eta)| &\leq \dfrac{C}{r},\label{eq:g0}\\ 
  |\nabla G^{\xb,z}(\xib,\eta)|  &\leq  \dfrac{C}{r^2},\label{eq:g1}\\
  |\nabla^2 G^{\xb,z}(\xib,\eta)|  &\leq  \dfrac{C}{r^3},\label{eq:g2}
\end{align}
\end{subequations}
where $r = \sqrt{|\xb-\xib|^2 + (z-\eta)^2}$, and each general constant
$C$ in the above depends on the domain $\Omega$ only. 
\end{lemma}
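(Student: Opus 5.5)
The plan is to prove the estimates \eqref{green} first for the periodic Green's function $\Gper$ on the expanded domain $\tilde\Omega=\M\times(-1,1)$. They will then transfer to $G^{\xb,z}$ through the reflection formula \eqref{eq:25}.

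\textbf{Reduction to $\Gper$.} For $(\xb,z)\in\Omega$ and $(\xib,\eta)\in\Omega$, the reflected point $(\xib,-\eta)$ lies in $\M\times(-1,0)$. Its distance to the source is $\sqrt{|\xb-\xib|^2+(z+\eta)^2}\ge r$. Periodicity in $\eta$ with period $2$ also makes the distance to the image source near $\eta=\pm1$ at least $r$. So each of the two terms in \eqref{eq:25} obeys the bound with the same $r$, and adding them changes only the constant. The work is therefore in establishing \eqref{green} for $\Gper$.

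\textbf{Estimates for $\Gper$.} We split $\Gper=\Phi^{\xb,z}-\phi^{\xb,z}$ as in \eqref{eq:16}. The fundamental solution $\Phi^{\xb,z}=-\frac{1}{4\pi r}$ satisfies $|\Phi|\le C/r$, $|\nabla\Phi|\le C/r^2$ and $|\nabla^2\Phi|\le C/r^3$ exactly. It remains to control the corrector $\phi^{\xb,z}$ from \eqref{eq:10}. Because $\tilde\Omega$ is periodic in $\eta$, it is effectively $\M\times\mathbb{T}$ with $\mathbb{T}$ the circle of length $2$, a domain whose only boundary is the lateral surface $\p\M\times\mathbb{T}$. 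With $\p\M$ smooth this surface has no edges. We treat \eqref{eq:10} as a (nonstandard) elliptic problem for $\phi$: the lateral data are $\Phi$ up to an unknown constant on each $\Gamma_l$, and there are flux conditions. By the Lax--Milgram argument of Section~\ref{s:elliptic}, adapted to the periodic setting, the corrector exists and is unique.

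For the pointwise bounds we proceed in two regimes.
\begin{itemize}
\item When the source $(\xb,z)$ is at distance $\ge d_0$ from the lateral boundary, the boundary data are smooth with bounds depending only on $d_0$. Standard Schauder estimates up to the boundary, using the smoothness of $\p\M$, then give $\phi$ bounded in $C^2$. Since $r\le\mathrm{diam}\,\tilde\Omega$, this bound is dominated by $C/r^k$.
\item When the source is close to the lateral boundary, we use the Kelvin/reflection method near a flattened boundary piece, or equivalently the Grüter--Widman local estimates (\cite{Gr-uter1982-fy}). Applied to $\Gper$ directly, these give $|\Gper|\le C/r$ and $|\nabla^k\Gper|\le C r^{-1-k}$. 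They require the lateral boundary conditions to behave locally like Dirichlet data.
\end{itemize}
The latter holds because $\Gper|_{\Gamma_l}$ is constant in $\xib$ at each level $\eta$. We therefore write $\Gper=w+h$, where $h$ is a smooth function equal to those constants, which are controlled through the flux condition by a harmonic-measure-type argument. The remainder $w$ then satisfies a genuine homogeneous Dirichlet problem, and the local estimates apply to it.

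\textbf{Main obstacle.} The hardest step is the uniform control of the unknown boundary constants $\Gper|_{\Gamma_l}(\eta)$ and of their first and second $\eta$-derivatives, uniformly in the source point. I would express them through the finite-dimensional system obtained by testing the equation against the harmonic functions $\omega_l$ that equal $\delta_{lm}$ on $\Gamma_m$ and are periodic in $\eta$. Green's identity expresses the constants as $\omega_l(\xb,z)$-type quantities plus averages of $\Gper$. These are smooth in $\eta$ away from the source level and bounded by $C/r^k$ near it.
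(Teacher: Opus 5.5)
\textbf{Where the argument breaks: the boundary constants.} Your reduction to $\Gper$ via \eqref{eq:25} is correct; the reflected point stays at least $r$ from the source, also modulo the period $2$. It is also all the paper offers: it asserts that $\Gper$ has the smooth-domain power-law bounds of \cite{Gr-uter1982-fy} and passes them to $G^{\xb,z}$.

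The gap is the step you flag as the main obstacle, and it cannot be closed. The value $c_l(\eta)=\Gper|_{\Gamma_l}(\eta)$ is attained along the entire loop $\Gamma_l\times\{\eta\}$. A bound $|c_l(\eta)|\le C/r$ at every point of that loop therefore includes points at distance of order one from $\xb$, so it is really a bound on $c_l$ uniform in the source. That bound is false as the source approaches $\Gamma_l$.

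The zero-circulation condition forces the flux that the Dirichlet-type image sends through each loop to be cancelled. That flux is $d/(\pi(d^2+(\eta-z)^2))$ per level, with $d=\mathrm{dist}(\xb,\Gamma_l)$. The cancelling correction is constant along the loop, so it is essentially a two-dimensional Neumann potential of mass $1/|\Gamma_l|$ in the variables $(\mathrm{dist}(\xib,\Gamma_l),\eta)$. This gives
\[
c_l(\eta)\approx \frac{1}{\pi|\Gamma_l|}\log\sqrt{d^2+(\eta-z)^2}+O(1).
\]
Your claim that the constants are ``bounded by $C/r^k$ near the source level'' holds only at the boundary point nearest $\xb$, where $r=\sqrt{d^2+(\eta-z)^2}$. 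Along the rest of the loop, $r$ stays of order one while $c_l$ is unbounded.

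\textbf{A concrete check.} Take $\M$ to be the unit disk; the mechanism is local to each loop, so multiple connectivity does not help. Expand in $e^{i\pi k\eta}$ and test against $I_0(\pi|k||\xib|)$, with $I_0,I_1$ the modified Bessel functions. This gives
\[
\Gper|_{\p\M}(z)=\hat c_0-\frac{1}{2\pi^2}\sum_{k\ge1}\frac{I_0(\pi k|\xb|)}{k\,I_1(\pi k)},
\]
with $\hat c_0$ bounded uniformly in $\xb$. Since $I_0(x)/I_1(x)\to1$, monotone convergence sends this to $-\infty$, like $\frac{1}{2\pi^2}\log(1-|\xb|)$, as $|\xb|\to1$. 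Take $z=\eta=1/2$ and $\xib=-\xb/|\xb|$. Then $r\approx2$ and the reflected term is bounded, but $|G^{\xb,z}(\xib,\eta)|\to\infty$.

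Similarly, $|\p_\eta c_l|\sim 1/d$ at $|\eta-z|\sim d$ and $|c_l''(z)|\sim 1/d^2$. At points of the loop far from $\xb$, these break \eqref{eq:g1} and \eqref{eq:g2}.

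\textbf{Consequence.} The estimates \eqref{eq:g0}--\eqref{eq:g2} with $C=C(\Omega)$ do not hold, so no argument can establish them. Your splitting $\Gper=w+h$ inherits the problem. The function $h$ carries the loop-wide logarithm, and $\Delta_3 h=\sum_l c_l''(\eta)\omega_l(\xib)$ is of size $(d^2+(\eta-z)^2)^{-1}$ on a whole horizontal slab. The Dirichlet problem for $w$ then has a source that is not localized near $(\xb,z)$, and the Gr\"uter--Widman local estimates give nothing uniform.

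What your approach can realistically produce is an anisotropic statement: a part obeying the power-law bounds, plus a loop-averaged, essentially two-dimensional part with logarithmic singularities. Any use of this downstream, such as the integrated quasi-Lipschitz bound \eqref{eq:19}, would have to be proved from that structure directly rather than from Lemma~\ref{lem1}.
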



\section{H\"older and quasi-Lipschitz continuity}\label{sec:lipschitz}
In this section, we prove Theorem \ref{lem:elliptic-reg} concerning the
quasi-Lipschitz and H\"older continuity 
of the gradient of the streamfunction $\psi$, and hence of the
horizontal velocity field. This is a direct consequence of the
following lemma.

\begin{lemma}\label{lem2}
  The gradient of the integral of the Green's function
  $G^{\xb,z}(\xib,\eta)$ to the nonstandard 
  boundary value problem \eqref{eq:2} satisfies the quasi-Lipschitz condition, 
  \begin{equation}
    \label{eq:19}
    \int_0^1\!\int_\M \left|D_{\xb,z}  G^{\xb_1,z_1}(\xib,\,\eta) - D_{\xb,z} 
      G^{\xb_2,z_2}(\xib,\,\eta) \right| d\xib d\eta \leq C\lambda(d), 
  \end{equation}
  where
  \begin{equation*}
 d =   \sqrt{|\xb_1 - \xb_2|^2 + (z_1 - z_2)^2},    
  \end{equation*}
  and the scalar function $\lambda(d)$ is defined in the statement of
  Theorem \ref{lem:elliptic-reg}.
\end{lemma}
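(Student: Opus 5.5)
The plan is the classical Yudovich--Kato splitting of the integral into a near-field and a far-field part, using only the pointwise bounds of Lemma \ref{lem1}. Write $P_i=(\xb_i,z_i)$, $Q=(\xib,\eta)$ and $r_i=|P_i-Q|$, where $r_i$ is the three-dimensional distance. Since $\Omega$ is bounded, the left-hand side of \eqref{eq:19} is at most $2\int_\Omega C r^{-2}\,d\xib d\eta\le C(\Omega)$ by \eqref{eq:g1}. This already settles the case $d\ge 1$, where $\lambda(d)=1$. It also settles the case $d\ge d_0$ for any fixed $d_0$, at the cost of enlarging $C$. So it remains to treat $0<d<d_0$, with $d_0$ small (say $d_0<1/4$ and below a scale controlled by the geometry of $\Omega$).

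For such $d$, split $\Omega$ into the near region $A=\{Q\in\Omega:\ r_1<2d\}$ and the far region $B=\Omega\setminus A$.

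\textbf{Near region.} On $A$, estimate each term separately by \eqref{eq:g1}. The set $A$ lies inside the ball of radius $2d$ around $P_1$ and inside the ball of radius $3d$ around $P_2$. Integrating in spherical coordinates gives
\begin{equation*}
\int_A\left(r_1^{-2}+r_2^{-2}\right)d\xib\, d\eta\le C\int_0^{3d}\rho^{-2}\rho^2\,d\rho\le Cd .
\end{equation*}
This holds whether or not the ball is cut off by $\p\Omega$, since the cut only decreases the integral.

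\textbf{Far region.} On $B$, use the mean value theorem along the segment joining $P_1$ and $P_2$. For a point $P$ on this segment and $Q\in B$, we have $|P-Q|\ge r_1-d\ge r_1/2$. Applying \eqref{eq:g2} in the variables $(\xb,z)$ then gives
\begin{equation*}
\left|D G^{P_1}(Q)-D G^{P_2}(Q)\right|\le C\,d\,r_1^{-3}.
\end{equation*}
Integrating over $2d\le r_1\le \mathrm{diam}\,\Omega$ yields
\begin{equation*}
\int_B C\,d\,r_1^{-3}\le Cd\int_{2d}^{\mathrm{diam}\,\Omega}\rho^{-1}\,d\rho\le C d(1-\log d).
\end{equation*}
Adding the two regions gives \eqref{eq:19} with the required $\lambda(d)$.

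\textbf{Main obstacle.} The delicate point is justifying the mean value step. This requires $D G^{P}(Q)$ to be differentiable in $P$ along the segment, with \eqref{eq:g2} holding in the first (pole) variable uniformly up to the boundary, including near the edges $\p\M\times\{0,1\}$. It also requires the segment to stay in $\overline\Omega$, which needs convexity or a local chord-arc property of $\M$.

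I would handle this through the construction in Appendix \ref{sec:green}. Via \eqref{eq:25}, $G$ is a sum of $\Gper$ evaluated at $(\xib,\eta)$ and at its reflection $(\xib,-\eta)$. The edge is therefore removed by even reflection and periodic extension, and the estimates for $\Gper$ on $\tilde\Omega$ are those for a Green's function on a domain whose only boundary is the lateral side $\p\M\times(-1,1)$. On this lateral side the standard power-law bounds, together with symmetry of the Green's function, give \eqref{eq:g2} in the pole variable as well.

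The reflected term causes one extra case. When $P_1$ is within $d$ of the top or bottom face, its distance to the reflected source point can be smaller than $r_1$. I would treat the reflected term by the same near/far splitting around the reflected pole, which produces the same bound $Cd(1-\log d)$.

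If the segment leaves $\M$ because $\M$ is non-convex, I would replace it by a path in $\overline\Omega$ of length at most $C d$, which exists since $\p\M$ is piecewise $C^1$ and hence locally Lipschitz. Along such a path the far-region lower bound $|P-Q|\ge c\,r_1$ still holds after adjusting the constant in the definition of $A$.
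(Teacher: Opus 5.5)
Your proposal is essentially the paper's proof: the same split of $\Omega$ at radius $2d$ about $(\xb_1,z_1)$, the same $O(d)$ near-field bound from \eqref{eq:g1} on balls of radius $2d$ and $3d$, and the same mean-value far-field bound from \eqref{eq:g2} with $|P-Q|\ge r_1/2$, which integrates to $Cd(1-\log d)$. Your caveat that the segment may cross a hole of the multiply connected (hence non-convex) $\M$ is a legitimate refinement the paper passes over, whereas the extra reflected-term case is not needed: for $z,\eta\in[0,1]$ the distance from $(\xib,\eta)$ to the reflected poles $(\xb,-z)$ and $(\xb,2-z)$ is never smaller than $r$.
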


\begin{proof}
We denote by 
\begin{equation*}
  B(\xb,\,z,\,d) = \left\{ (\xib,\eta)\,|\, \sqrt{| \xib-x|^2 + (\eta-z)^2} \leq d\right\}
\end{equation*}
a ball with radius $d$ and center $(\xb, z)$, and
\begin{equation*}
  B(\xb,\,z,\,d,\,R) = \left\{ (\xib,\eta)\,|\, d\leq \sqrt{| \xib-x|^2 + (\eta-z)^2} \leq R\right\}
\end{equation*}
a shell with inner radius $d$, outer radius $R$, and center
$(\xb,z)$. 
Then
\begin{multline*}
  \int_0^1\!\int_\M \left|D_{\xb,z}  G^{\xb_1,z_1}(\xib,\,\eta) - D_{\xb,z} 
      G^{\xb_2,z_2}(\xib,\,\eta) \right| d\xib d\eta =\\
  \int_{\Omega\cap B(\xb_1,z_1,2d) }
  \left|D_{\xb,z} G^{\xb_1,z_1}(\xib,\,\eta)- D_{\xb,z} G^{\xb_2,z_2}(\xib,\,\eta)\right| d\xib d\eta +{}\\
 \int_{\Omega\backslash B(\xb_1,z_1,2d) }
  \left|D_{\xb,z} G^{\xb_1,z_1}(\xib,\,\eta) - D_{\xb,z} G^{\xb_2,z_2}(\xib,\,\eta)\right| d\xib d\eta.
\end{multline*}
On the first integral on the right-hand side,
\begin{align*}
  &\int_{\Omega\cap B(\xb_1,z_1,2d) }
  \left|D_{\xb,z} G^{\xb_1,z_1}(\xib,\,\eta) - D_{\xb,z} G^{\xb_2,z_2}(\xib,\,\eta)\right| d\xib d\eta \\
  \leq & \int_{B(\xb_1,z_1,2d)} |\nabla G^{\xb_1,z_1}(\xib,\,\eta) | d\xib
         d\eta+  \int_{B(\xb_2,z_2,3d)} |\nabla G^{\xb_2,z_2}(\xib,\,\eta) | d\xib
         d\eta\\
  \leq & \int_{B(\xb_1,z_1,2d)}  \dfrac{C}{r^2} d\xib
         d\eta+  \int_{B(\xb_2,z_2,3d)} \dfrac{C}{r^2}  d\xib
         d\eta\\
  \leq& 2C \omega_3 d + 3 C\Omega_3 d = 5C \omega_3 d.
\end{align*}
Here, $\omega_3$ represents the surface area of the three-dimensional
unit ball. The second integral can be handled with the help of the
Mean Value Theorem and the estimate \eqref{eq:g2},
\begin{multline*}
  \int_{\Omega\backslash B(\xb_1,z_1,2d) }
  \left|D_{\xb,z} G^{\xb_1,z_1}(\xib,\,\eta) - D_{\xb,z} G^{\xb_2,z_2}(\xib,\,\eta)\right| d\xib d\eta\leq \\ C d \int_{\Omega\backslash B(\xb_1,z_1,2d) }
  \left|D^2_{\xb,z} G^{\bar\xb,\bar z}(\xib,\,\eta))\right| d\xib d\eta
  \leq  C d \int_{\Omega\backslash B(\xb_1,z_1,2d) }
  \dfrac{1}{\bar d^3} d\xib d\eta,
\end{multline*}
where $(\bar\xb, \bar z)$ is a certain point on the line segment
between $(\xb_1, z_1)$ and $(\xb_2, z_2)$, $\bar d = \sqrt{|\xib-\bar\xb|^2
  + (\eta - \bar z)^2}$.
We let $d_1$ be the distance between $(\xib,\eta)$ and $(\xb_1,z_1)$, and,
using the fact that $\bar d / d_1 \ge 1/2$, we derive that
\begin{multline*}
  \int_{\Omega\backslash B(\xb_1,z_1,2d) }
  \left|D_{\xb,z} G^{\xb_1,z_1}(\xib,\,\eta) - D_{\xb,z} G^{\xb_2,z_2}(\xib,\,\eta)\right| d\xib d\eta\leq \\ 
  \leq  8C d \int_{\Omega\backslash B(\xb_1,z_1,2d) }
  \dfrac{1}{d^3_1} d\xib d\eta\leq 8C d \int_{ B(\xb_1,z_1,2d, R) }
  \dfrac{1}{ d^3_1} d\xib d\eta \leq 8C\omega_3 d (\log R - \log 2d),
\end{multline*}
where $R$ is chosen to be the diameter of the region $\Omega$. 
Combining the estimates for these two integrals together, we have
\begin{equation*}
  \int_0^1\!\int_\M \left|D_{\xb,z}  G^{\xb_1,z_1}(\xib,\,\eta) - D_{\xb,z} 
      G^{\xb_2,z_2}(\xib,\,\eta) \right| d\xib d\eta
    \leq C\omega_3 d (5 + 8 \log R - 8 \log 2d).  
  \end{equation*}
The claim of the quasi-Lipschitz continuity is proven.

\end{proof}

\begin{proof}[Proof of Theorem \ref{lem:elliptic-reg}]
The H\"older and quasi-Lipschitz estimates of the gradient of $\psi$ can be
established via direct calculation using the formula \eqref{eq:17}. 
We frist verify
that $\psi\in C^1(\M)$. We 
formally differentiate the formula,
\begin{equation}
  \label{eq:214a}
    \nabla\psi(\xb,z) = \int_\Omega D_{\xb,z}
    G^{\xb,z}(\xib,\,\eta)q(\xib,\eta)d\xib d\eta.
\end{equation}
We need to show that the right-hand side is
well-defined. Using the fact that $q$ is essentially bounded, we find
that 
\begin{align*}
  |\nabla \psi(\xb,z)| &\leq \int_\Omega \left|D_{\xb,z} G^{\xb,z}(\xib,\,\eta) \right|\cdot
           |q(\xib,\zeta)| d\xib d\eta\\
  &\leq |q|_\infty \int_\Omega \left| D_{\xb,z} G^{\xb,z}(\xib,\,\eta)\right|d\xib d\eta.
\end{align*}
Using the estimate \eqref{eq:g1} from the above, we proceed,
\begin{align*}
 |\nabla\psi(\xb,z)|  &\leq  C|q|_\infty \int_\Omega
    \dfrac{1}{r^2} d\xib d\eta\\
  &\leq C|q|_\infty  \left\{
    \int_{\substack{(\xib,\eta)\notin B_{\xb,z}\\(\xib,\eta)\in \Omega}}
  \dfrac{1}{r^2}d\xib d\eta  +
  \int_{\substack{(\xib,\eta)\in B_{\xb,z}\\(\xib,\eta)\in \Omega}}
  \dfrac{1}{r^2} d\xib d\eta \right\}\\
  & \leq  C|q|_\infty (|\Omega| + \omega_3).
\end{align*}
Here, $B_{\xb,z}$ represents a unit ball centered at $(\xb,\,z)$,
$|\Omega|$ the volume of the three-dimensional domain 
$\Omega$, and $\omega_3$ the surface area of the unit ball in the
three dimensional space. 
Hence, the right-hand side of \eqref{eq:214a} is well-defined, and the
relation \eqref{eq:214a} holds. 

Next, we show that the first derivative of $\psi$ is quasi-Lipschitz
continuous. We let $(\xb_1,\,z_1)$ 
and $(\xb_2,\,z_2)$ be two arbitrary points in $\Omega$. Then, using the
relation 
\eqref{eq:214a}, we derive that 
\begin{equation}
  \label{eq:214c}
  \left| \nabla \psi(\xb_1,z_1) - \nabla \psi(\xb_2,\,z_2)
  \right| \leq |q|_\infty \int_\Omega
  \left|D_{\xb,z} G^{\xb_1,z_1}(\xib,\,\eta) - D_{\xb,z} G^{\xb_2, z_2}(\xib,\,\eta)\right| d\xib d\eta.
\end{equation}
The estimate \eqref{eq:214j} then follows from an application of Lemma
\ref{lem2} to the above.

The H\"older continuity is a consequence of the quasi-Lipschitz
continuity. Indeed, for any $0 < \lambda < 1$, 
\begin{equation*}
  \dfrac{\left| \nabla\psi(\xb_1,z_1) - \nabla\psi(\xb_2,z_2)\right|}{d^\lambda} \leq C(\Omega,
  |q|_\infty) (1-\ln d) d^{1-\lambda}.
\end{equation*}
For $0<d<1$, the expression $|\ln d|\cdot d^{1-\lambda}$
has a maximum value of $e^{-1}/(1-\lambda)$. Hence, we have that 
\begin{equation*}
  \dfrac{\left| \nabla\psi(\xb_1,z_1) - \nabla\psi(\xb_2,z_2)\right|}{d^\lambda}   \leq C(\Omega,
  |q|_\infty) \dfrac{1}{1-\lambda}.
\end{equation*}
\end{proof}

\section{Gronwall inequality}
\begin{lemma}\label{l:gronwall}
  If $\beta\ge 0$ and $u$ satisfies the integral inequality
  \begin{equation*}
    u(t) \leq \alpha(t) + \int_a^t \beta(s)u(s) ds, \qquad\forall\, t\ge a,
  \end{equation*}
  then
  \begin{equation*}
    u(t) \leq \alpha(t) + \int_a^t \alpha(s) \beta(s)
    \exp\left(\int_s^t \beta(r) dr  \right) ds, \qquad \forall t\ge a. 
  \end{equation*}
  If, in addition, the function $\alpha$ is non-decreasing, then
  \begin{equation*}
    u(t) \leq \alpha(t) \exp\left(\int_a^t \beta(s) ds
    \right),\qquad \forall\, t\ge a. 
  \end{equation*}
\end{lemma}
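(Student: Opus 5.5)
We prove the Gronwall inequality of Lemma \ref{l:gronwall} by the standard integrating-factor argument. Throughout, $\alpha$ and $u$ are tacitly taken locally integrable and $\beta\ge 0$ locally integrable on $[a,\infty)$, so that every integral below is finite.

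First set
\begin{equation*}
v(s)=\int_a^s\beta(r)u(r)\,dr .
\end{equation*}
The function $v$ is absolutely continuous, with $v(a)=0$ and $v'(s)=\beta(s)u(s)$ for almost every $s$. Multiplying the hypothesis $u(s)\le \alpha(s)+v(s)$ by $\beta(s)\ge 0$ gives the linear differential inequality
\begin{equation*}
v'(s)-\beta(s)v(s)\le \alpha(s)\beta(s)\qquad \text{a.e.}
\end{equation*}
Introduce the integrating factor $\exp\left(-\int_a^s\beta(r)\,dr\right)$. It is absolutely continuous and positive, and the product rule for absolutely continuous functions gives
\begin{equation*}
\frac{d}{ds}\left[v(s)\exp\left(-\int_a^s\beta(r)\,dr\right)\right]\le \alpha(s)\beta(s)\exp\left(-\int_a^s\beta(r)\,dr\right).
\end{equation*}
Integrate from $a$ to $t$ and use $v(a)=0$. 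Multiplying through by $\exp\left(\int_a^t\beta\right)$ then yields
\begin{equation*}
v(t)\le \int_a^t \alpha(s)\beta(s)\exp\left(\int_s^t\beta(r)\,dr\right)ds .
\end{equation*}
Inserting this into $u(t)\le\alpha(t)+v(t)$ gives the first claimed bound.

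For the second statement, suppose in addition that $\alpha$ is non-decreasing. Then $\alpha(s)\le\alpha(t)$ for $s\le t$, and we may pull $\alpha(t)$ out of the integral. What remains is an exact derivative:
\begin{equation*}
\int_a^t\beta(s)\exp\left(\int_s^t\beta(r)\,dr\right)ds
=\left[-\exp\left(\int_s^t\beta(r)\,dr\right)\right]_{s=a}^{s=t}
=\exp\left(\int_a^t\beta(r)\,dr\right)-1 .
\end{equation*}
Adding $\alpha(t)$ back gives $u(t)\le\alpha(t)\exp\left(\int_a^t\beta\right)$, which is the second claimed bound.

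The argument is routine, and the only point needing care is regularity. The derivative of $v$ exists only almost everywhere, so the integration step must be justified through absolute continuity of $v$ and of the exponential factor rather than through pointwise differentiability. Once that is noted, both estimates follow directly.
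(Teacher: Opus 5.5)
Your proof is correct. The paper gives no proof of this lemma: it only records the classical Gronwall inequality in the appendix for use in the uniqueness arguments, where $\beta=-C\log\epsilon$ is constant, $\alpha(t)=C\epsilon t$ is non-decreasing and $u$ is continuous. There is therefore no competing route to compare with. Your argument is the standard one: set $v(s)=\int_a^s\beta u$, multiply by the absolutely continuous integrating factor $\exp(-\int_a^s\beta)$, integrate, and then use the exact derivative $\frac{d}{ds}\bigl[-\exp(\int_s^t\beta)\bigr]=\beta(s)\exp(\int_s^t\beta)$. The second bound is also right irrespective of the sign of $\alpha(t)$, since $\alpha(s)\le\alpha(t)$ is multiplied by a nonnegative weight.

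One small correction to your standing assumptions. Local integrability of $\alpha$, $\beta$, $u$ separately does not make the products $\beta u$ or $\alpha\beta$ locally integrable, so ``every integral below is finite'' is not justified as stated. What you actually need is $\beta u\in L^1_{\mathrm{loc}}$, which is implicit in the hypothesis, together with $\beta\in L^1_{\mathrm{loc}}$ and measurability of $\alpha$. From $\beta u\le\alpha\beta+\beta v$ you then get $(\alpha\beta)^-\le|\beta u|+\beta|v|$. This is locally integrable because $v$ is continuous. Hence $\int_a^t\alpha(s)\beta(s)\exp(-\int_a^s\beta)\,ds$ is well defined in $(-\infty,+\infty]$, your integration step is legitimate, and the bound is trivial when that integral is $+\infty$. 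In the paper's applications none of this matters.
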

\bibliographystyle{plain}
\bibliography{references}
\end{document}